\newcommand\Bun{\operatorname{\mathfrak{B}un}}
\renewcommand{\hat}[1]{\widehat{#1}}
\newcommand\dl{\mathrm{dl}} 
\newcommand\iR{\mathrm{\cR}} 
\newcommand\rR{\mathrm{\cR_{\bbR}}}
\newcommand\boxb[1]{\square_b}
\newcommand\paperintro%
\newcommand\paperbody%
\newtheorem{theorem}{Theorem}
\newtheorem{corollary}[theorem]{Corollary}
\newtheorem{proposition}[theorem]{Proposition}
\numberwithin{equation}{section}
\theoremstyle{remark}
\newtheorem{definition}{Definition}
\newcommand\cFTs{{}^{\Phi}\overline{T}\kern-1pt{}^*}
\newcommand\Tr{\operatorname{Tr}}
\newcommand\Ch{\operatorname{Ch}}
\newcommand\tphi{\tilde{\phi}}
\newcommand\cI{\mathcal{I}}
\newcommand\cR{\mathcal{R}}
\newcommand\hN{\widehat{N}}
\newcommand\vN{\check{N}}
\newcommand\vQ{\check{Q}}
\newcommand\hGa{\widehat{\Gamma}}
\newcommand\hQ{\widehat{Q}}
\newcommand\bbC{\mathbb C}
\newcommand\bbR{\mathbb R}
\newcommand\bbS{\mathbb S}
\newcommand\bbZ{\mathbb Z}
\newcommand\CI{{\mathcal{C}}^{\infty}}
\newcommand\cFNs{{}^{\Phi}\overline N\kern-1pt{}^*}
\newcommand\red{\operatorname{red}}
\newcommand\tr{\operatorname{tr}}
\newcommand\ev{\operatorname{even}}
\newcommand\Id{\operatorname{Id}}
\newcommand\UU{\operatorname{U}}
\newcommand\loc{\operatorname{loc}}
\newcommand\Mand{\text{ and }}
\newcommand\Miff{\text{ iff }}
\newcommand\Mthen{\text{ then }}
\begin{document}
\title[K-theory and Resolution, II]
{Equivariant K-theory and Resolution\\
II: Non-Abelian actions}

\author{Panagiotis Dimakis and Richard Melrose}
\address{Department of Mathematics, Stanford University}
\address{Department of Mathematics, Massachusetts Institute of Technology}
\email{pdimakis@stanford.edu, rbm@math.mit.edu}

\begin{abstract}

The smooth action of a compact Lie group on a compact manifold can be
resolved to an iterated space, as made explicit by Pierre Albin and the
second author. On the resolution the lifted action has fixed isotropy type
corresponding to the open stratum and also in an iterated sense, with
connecting equivariant fibrations over the boundary hypersurfaces covering
the resolutions of the other strata. This structure descends to a
resolution of the quotient as a stratified space. For an Abelian group
action the equivariant K-theory can then be described in terms of bundles
over the bases `dressed' by the representations of the isotropy types with
morphisms covering the connecting maps. A similar model is given here
covering the non-Abelian case. Now the reduced objects are torsion-twisted
bundles over finite covers of the bases, corresponding to the projective
action of the normalizers on the representations of the isotropy groups,
again with morphisms over all the boundaries. This leads to a closely
related iterated deRham model for equivariant cohomology and, now with
values in forms twisted by flat bundles of representation rings over the
bases, for delocalized equivariant cohomology. We show, as envisioned by
Baum, Brylinksi and MacPherson, that the usual equivariant Chern character,
mapping to equivariant cohomology, factors through a natural Chern
character from equivariant K-theory to delocalized equivariant cohomology
with the latter giving an Atiyah-Hirzebruch isomorphism.
\end{abstract}

\maketitle
\tableofcontents

\paperintro
\section*{Introduction}

In this continuation of \cite{EqKtRe} the description, in terms of the
normal resolution, of the equivariant K-theory of a compact manifold with an
action by a compact Lie group is extended to the general, non-Abelian,
case. Doing so suggests a deRham realization of
\emph{delocalized} equivariant cohomology, in the sense of Baum, Brylinksi
and MacPherson \cite{MR86g:55006}. The natural Chern character
\begin{equation}
\Ch:K^0_G(M)\longrightarrow H^{\ev}_{G,\dl}(M)
\label{EKR.98}\end{equation}
yields an Atiyah-Hirzebruch isomorphism over $\bbR.$

The simplest cases of smooth $G$ action can be analyzed without
resolution. If $G$ acts freely then the compact manifold is the total space
of a principal bundle $\kappa :M\longrightarrow Y,$ over the smooth orbit
space $Y=G\backslash M.$ The action on an equivariant bundle, $V,$ over $M$
gives descent data defining a bundle $W$ over $Y.$ Conversely, every
equivariant bundle is equivariantly isomorphic to the pull-back, $V=\kappa
^*W,$ of a bundle over $Y$ with $G$ action $gw_x=w_{gx}$ and equivariant
bundle isomorphisms over $M$ descend uniquely to bundle isomorphisms over
$Y.$ Passing to the Grothendieck groups this induces a natural
identification as rings
\begin{equation}
K_G(M)=K(Y).
\label{E-K-n.256}\end{equation}
Thus the equivariant K-theory of $M,$ as a ring, does \emph{not} depend on the bundle
structure but only on the quotient. Note however that the representation
ring of $G$ acts on $K_G(X)$ by taking the tensor product with a
representation viewed as an equivariant bundle; the composite is a
`transfer map' 
\begin{equation}
\tau:\iR(G)\longrightarrow K(Y)
\label{EKR.111}\end{equation}
which does involve the geometry of the principal bundle.

The second elementary case is that of a trivial action. Since this
corresponds to the isotropy group below we denote the compact Lie group
acting as $H.$ Let $H'$ be a complete set of irreducible unitary
representations, with $M(\pi)$ the Hermitian vector space on which $\pi\in H'$
acts
\begin{equation}
H'\ni\pi\Longrightarrow \pi :H\longrightarrow \UU(M(\pi)).
\label{E-K-n.297}\end{equation}
Each fiber of an $H$-equivariant bundle over $M$ may then be decomposed
under the action to a sum
\begin{equation}
V_x=\bigoplus_{\pi\in H'} V_x(\pi),\
V(\pi)=W(\pi)\otimes M(\pi)
\label{E-K-n.298}\end{equation}
where the $W(\pi)$ form an unrestricted bundle with compact support on
$M\times H'.$

An $H$-equivariant isomorphism of $V$ induces a bundle isomorphism of each
of the $W(\pi)$ and the decomposition of representations under tensor product
then results in the identification as rings
\begin{equation}
K_H(M)=K(M)\otimes_{\bbZ}\cR(H).
\label{E-K-n.257}\end{equation}

The more general case in which the $G$-action has a single stratum is
intermediate between these two cases, where now all the isotropy groups are
conjugate. Borel showed that such a smooth action of a compact Lie group
with fixed isotropy type can be reduced to a principal action by the
quotient $Q=N(H)/H$ of the normalizer in $G$ for a choice of isotropy group
\begin{equation}
\xymatrix{
Q\ar@{-}[r]&X^H\ar@{^(->}[rr]\ar[dr]&&X\ar[dl]\\
&&Y=X/G
}
\label{EKR.101}\end{equation}
where $X^H$ is the submanifold where the isotropy group is $H.$ As rings
\begin{equation}
K_G(X)=K_N(X^H).
\label{EKR.163}\end{equation}

In this setting the equivariant K-theory was identified by Wassermann
\cite{Wassermann} in terms of twisted K-theory. Our description of K-theory in the
general case follows from a related decomposition of equivariant vector bundles in
terms of twisted bundles, or gerbe modules, rather than the $C^*$-theoretic
approach in \cite{Wassermann}.

Thus, suppose that $H\subset N$ is a closed normal subgroup (the notation
indicating that $N$ is perhaps the normalizer of $H$ in a larger group)
acting, with fixed isotropy group $H,$ on a compact manifold (allowed to
have corners) $Z;$ so $Z$ is a principal $Q=N/H$ bundle over a compact
manifold with corners $Y.$ The action of $N$ on $H$ by conjugation induces
a locally trivial action of the quotient group $Q=N/H$ on the irreducible
representations, $H',$ so with finite orbits. This action lifts to an
action on $Z\times H'$ with the same isotropy group, $H.$ Our `reduced'
description of $N$-equivariant bundles over $Z$ utilizes the finite covers
\begin{equation}
Y(\pi)=(Z\times N\pi)/N\longrightarrow Y
\label{EKR.108}\end{equation}
given by replacing $H'$ by the orbit $N\pi\subset H'.$

Schur's Lemma shows that the projective action of $N$ on the representation
space of $\pi$ lifts to a projective action of $N$ on the sum over the
orbit of the representations and so to a central extension
\begin{equation}
\xymatrix{
\UU(1)\ar[r]&\vN(\pi)\ar[r]&N(\pi)
}
\label{EKR.102}\end{equation}
which corresponds to Mackey's obstruction. Since $H$ lifts naturally into
$\vN(\pi)$ this defines a central extension $\vQ(\pi)$ of $Q$ for each
orbit. Making a choice of representatives $\pi\in H'$ parameterizing the
orbit space $H'_N=H'/N$ the space
\begin{equation}
\xymatrix{
Z(\pi)= (Z\times N\pi)\ar[r]^-{Q}&Y(\pi)
}
\label{EKR.103}\end{equation}
is a principal bundle for the action of $Q.$

The decomposition \eqref{E-K-n.298}  may be captured by considering the
$H$-equivariant homomorphisms
\begin{equation}
W_x(\pi)=\{l:M(\pi)\longrightarrow V_{x}(\pi); h\circ
l=l\circ\pi(h)\ \forall\ h\in H \}.
\label{E-K-n.300}\end{equation}
This has dimension, $n,$ the multiplicity of $\pi$ in $V_x.$ The rigidity
of representations of compact groups means that $n$ is constant on
components of $X.$ Let $W(\pi)$ denote the collective bundle over $Z(\pi)$
with fiber $W_x(\pi')$ at $(x,\pi')\in (X,N\pi).$

The action of $N$ on $V$ induces a $\hQ(\pi)$-equivariant action on
$W(\pi)$ by the reverse extension to $\vQ(\pi).$ This makes the $W(\pi)$
into twisted bundles over $Y(\pi)$ or equivalently into gerbe modules for
the lifting bundle gerbe defined by the central extension $\hQ(\pi)$ and
the principal bundle $Z(\pi)\longrightarrow Y(\pi).$ Functorially this is
associated with the (torsion) Dixmier-Douady class $\alpha (\pi)\in
H^3(Y(\pi);\bbZ)$ which is the transgression of the Mackey class. Denoting
the category of twisted bundles $\Bun(Y(\pi);\alpha (\pi))$ (in which the
morphisms depend on the trivialization of $\alpha (\pi))$ this gives an
equivalence of categories
\begin{equation}
\Bun_N(Z)\longrightarrow \bigoplus_{\pi\in H'_N}\Bun(Y(\pi);\alpha (\pi))
\label{EKR.104}\end{equation}
where the sums on the right are finite. Wassermann's
(\cite{Wassermann}) description of the equivariant K-theory for an action
with fixed isotropy type
\begin{equation}
K_G(M)\simeq\bigoplus_{\pi\in H'_N} K(Y(\pi);\alpha (\pi))
\label{EKR.105}\end{equation}
is a direct consequence. Note that this is a significantly functorial
statement because the twisted K-groups on the right depend on the
representations of the Dixmier-Douady class with different representations
leading to isomorphisms corresponding to tensoring with line bundles.

The localized representation ring, $L(H),$ consisting of the $H$-invariant
polynomials on the Lie algebra of $H$ (so $L(H)$ is the real representation
ring if $H$ is connected), is acted upon by the normalizer, $N,$ of $H$ and
hence by the quotient $Q.$ This action is locally trivial so the quotient
\begin{equation}
Z\times L(H)/N=L_H(Y)
\label{EKR.152}\end{equation}
is a flat bundle over $Y$ modeled on $L(H).$ By an argument similar to that
for free actions (see for example \cite{Guillemin-Sternberg},
\cite{Meinrenken-Enc}), the Cartan model for the equivariant cohomology of
$X$ can be reduced to the deRham cohomology of the complex of forms with
values in $L_H$
\begin{equation}
\CI(YZ;\Lambda ^*\otimes L_H).
\label{EKR.153}\end{equation}

Replacing $L(H)$ by the real representation ring generated by $H',$
$\rR(H)=\oplus(H'\otimes\bbR),$ the quotient of $Z\times\rR(H)$ by the
action of $Q$ is again the total space of a flat coefficient bundle
which we denote $\rR(Y).$ Then delocalized equivariant cohomology, in the
sense of Baum, Brylinski and MacPherson, corresponds to the deRham
cohomology of
\begin{equation}
\CI(Y;\Lambda ^{*}\otimes\rR(Y)).
\label{EKR.106}\end{equation}
As a consequence of the properties of torsion-twisted K-theory, in
particular that the Chern character is a deRham class in the usual sense,
the Chern character applied to the twisted bundles $W(\pi)$ over $Y(\pi)$
gives an element of \eqref{EKR.106} and defines the delocalized Chern
character \eqref{EKR.98}. The usual equivariant Chern character to equivariant deRham
cohomology corresponds to applying the twisted Chern character to the
(twisted) coefficient bundles defined by the elements of $H'$ and so
factors through \eqref{EKR.98} by a surjective localization map.

A general smooth action by a compact Lie group, $G,$ on a compact manifold,
$M,$ is reduced to an iterated version of the case of a single isotropy
type by resolution. The `normal resolution' of the action (which may not be
minimal as a resolution), see \cite{MR2560748}, is obtained by the radial
blow up of the isotropy types, the strata, of the action in any order
compatible with the (reversed) partial inclusion order on the isotropy
classes. The result is a compact manifold with corners with iterated
blow-down map
\begin{equation}
\beta :X\longrightarrow M
\label{EKR.99}\end{equation}
with interior the preimage of the open isotropy type. The resolutions of
the closures, $M_a,$ of the components of the non-open isotropy types are
in 1-1 correspondence with the boundary hypersurfaces $H_a$ of $X$ as the
images of equivariant fibrations 
\begin{equation}
\phi_a:H_a\longrightarrow X_a,\ \beta _a:X_a\longrightarrow M_a
\label{EKR.100}\end{equation}
with the action on each $X_a$ having the same conjugacy class of closed
subgroups as the action on the interior of $M_a.$ These fibrations give $X$
an iterated structure, in particular the fibrations are jointly compatible
at each boundary face of $X,$ forming a chain under fiber inclusion with
respect to the isotropy partial order.

As shown in \cite{EqKtRe} smooth equivariant bundles over $M$ lift to
smooth equivariant bundles over each of the $X_a,$ related by pull-back
under the $\phi_a.$ This functor to iterated equivariant bundles over $X$
projects to an equivalence on the Grothendieck groups, i.e.\ the iterated
equivariant bundles on $X$ lead to the equivariant K-theory of $M.$

On each of the compact manifolds with corners, $X_a,$ including $X=X_0,$
the description of equivariant bundles proceeds through the Borel reduction
to corresponding manifolds $Z_a$ where the action has a fixed isotropy
group. The quotient manifolds $Y_a$ with quotient fibrations $\phi_a$ form
an iterated manifold giving a resolution of the stratified space $M/G.$ The
main step in defining the iterated twisted bundles representing equivariant
K-theory on the resolution of the quotient is to describe the pull-back
maps corresponding to the equivariant fibrations $\phi_a.$ It is
significant here that the existence of an equivariant fibration for a
space, $H_a\longrightarrow X_a$ with $G$-actions on both $H_a$ and $X_a$
having single isotropy type, imposes a partial triviality condition on the
action on $H_a.$

Changing notation, let $\phi:X_0\longrightarrow X_1$ be a fibration of
smooth manifolds with corners, equivariant for $G$-actions with fixed
isotropy type, and with quotient fibration $\tphi:Y_0\longrightarrow Y_1.$
We may assume that $Y_0$ is connected. First pass to the Borel reduction
$Z_1=X^{H_1}_1$ of $X_1$ for a choice of isotropy group $H_1,$ with
normalizer $N_1.$ The preimage $Z_0=\phi^{-1}(Z_1)$ is a `reduction' of
$X_0$ generalizing Borel's reduction and made precise below. Choose a
component of $Z_0$ and an isotropy group $H_0$ at some point in it. The
isotropy groups at all points of this component are necessarily $H_1$
conjugate to $H_0,$ not just conjugate under the $N_1$-action. Let
$Z_0'\subset Z_0$ be the union of the components where all isotropy groups
are $H_1$ conjugate to $H_0,$ this is a reduction of $Z_0,$ and hence
$X_0,$ with group action by $N_1'\subset N_1$ the subgroup under which
$H_0$ is mapped to an $H_1$ conjugate. The image, $Z_1'=\phi(Z_0')$ is a
reduction of $Z_1$ with the same group, $N_1'$ acting. Passing to the Borel
reduction $Z_0''\subset Z_0'$ with its $N'_1$-action gives spaces with
isotropy groups $H_0\subset H_1$ and actions by groups $N_0$ and $N_1'$
connected by an equivariant fibration. As for Borel reduction under these
successive steps the $G$-equivariant bundles over $X_0$ and $X_1,$
respectively, are identified with with $N_0$ and $N_1'$-equivariant bundles
over $Z''_0$ and $Z_1'.$

This `double' reduction of the $G$-action on the boundary hypersurfaces
$H_a\subset X$ corresponds to a refinement of the decomposition of
$G$-equivariant bundles into twisted bundles and allows pull-back under
$\phi_1$ for equivariant bundles to be identified with an `augmented'
pull-back, $\phi_1^{\#},$ on the corresponding twisted bundles. Applied
iteratively this gives the compatibility conditions on twisted bundles and
a functorial reduction
\begin{equation}
\begin{gathered}
\Bun_G(X)\equiv \Bun_{\red}(Y_*)\subset\bigoplus_{\pi\in H'_N}\Bun(Y(\pi)
;\alpha (\pi)),\\
\oplus_{\alpha } W_\alpha \in \Bun_{\red}(Y_*)\Longleftrightarrow 
W_\alpha \big|_{H_\beta }=\phi^{\#}_{\alpha ,\beta } W_\beta\ \forall\ \beta >\alpha 
\end{gathered}
\label{EKR.109}\end{equation}
identifying $G$-equivariant bundles over the original $G$-space $M$ with
iterated twisted bundles over the resolution of the quotient. Notice
however that the pull-back maps here involve additional information not found in the
quotient itself, and the same is true of the twistings.

There is a corresponding description of equivariant cohomology in terms of
iterated forms on the resolved quotient with coefficients in the bundles
formed from the localized representation rings of the isotropy groups.
These are linked by augmented pull-back maps. Then delocalized equivariant
cohomology is defined by analogy where the coefficient rings are the full
real representation rings. This results in an Atiyah-Hirzebruch isomorphism
\eqref{EKR.98}, as envisioned by Baum, Brylinski and MacPherson. This is
shown using the six term exact excision sequences corresponding to pruning the
isotropy tree, essentially as in \cite{EqKtRe}.

The authors thank Victor Guillemin, Eckhard Meinrenken and David Vogan for
helpful discussions.

\paperbody

\section{Resolution and Lifting}\label{Res}

We refer to the corresponding sections of \cite{EqKtRe} for a description
of the normal resolution of a compact manifold with a smooth action by a
compact Lie group and the categorical lifting of $G$-bundles. If the
initial manifold has corners then we demand that the action be `boundary
free' in the sense that if one connected boundary hypersurface is mapped to
another by an element of the group then they do not intersect. If this is
not the case then it can be arranged by replacing the manifold by its total
boundary blowup. This boundary-free property is preserved under normal
resolution and so applies throughout.

\section{Reduction}\label{Red}

Borel, see \cite{MR2189882}, showed how to reduce the geometry of the
action of a compact Lie group with a fixed isotropy type to the special
case of a fixed, i.e.\ normal, isotropy group. The geometry is thereby
reduced to that of a principal bundle for the quotient of the normalizer by
the chosen isotropy group. We need a more general version of this
construction which we term `reduction' below.

Let $X$ be a compact manifold (meaning possibly with corners always) with a
smooth action by a compact Lie group, $G,$ with unique isotropy type. This
is a fibre bundle $X\longrightarrow Y=X/G$ with fibre modeled on $G/H$ for
a choice of isotropy group, $H.$ By a reduction of $X$ we mean a smooth
submanifold $Z\subset X$ which meets every orbit, so defines a subfibration
over $Y,$ is stabilized by a closed subgroup $K$ in the strong sense that
\begin{equation}
g\in G,\ x\in Z\Mthen gx\in Z \Miff g\in K
\label{11.7.2020.3}\end{equation}
and on which $K$ acts with fixed isotropy type. In particular Borel's
reduction is the case where $Z=X^H$ is the set of points where the isotropy
group is a given element of the conjugacy class. Equivariant K-theory
behaves in the same way in this more general setting.

\begin{proposition}\label{11.7.2020.1} If $X$ is a compact manifold with
a smooth action, with unique isotropy type, by a compact Lie group, $G,$ and
$Z\subset X$ is a reduction of it, with stabilizer $K,$ as defined above, then
restriction gives an equivalence of categories 
\begin{equation}
\Bun_G(X)\to \Bun_K(Z).
\label{11.7.2020.2}\end{equation}
\end{proposition}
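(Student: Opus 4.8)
The plan is to exhibit an explicit inverse, up to natural isomorphism, to the restriction functor $\Bun_G(X)\to\Bun_K(Z)$, namely an induction (associated-bundle) construction, and then to check that the two composites are naturally isomorphic to the respective identity functors. First I would record the basic geometric facts: since $Z$ meets every $G$-orbit and is stabilized by $K$ in the strong sense of \eqref{11.7.2020.3}, the map $G\times_K Z\to X$, $[g,z]\mapsto gz$, is a diffeomorphism. Surjectivity is the meeting-every-orbit hypothesis; injectivity is precisely \eqref{11.7.2020.3}, which says that the only ambiguity $gz=g'z'$ forces $g^{-1}g'\in K$; smoothness of the inverse follows because $Z\to Y$ is a subfibration of the fibre bundle $X\to Y$ and the construction is local over $Y$, where it reduces to the standard statement for $G/H$ and a $K$-slice. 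This identification is $G$-equivariant for the left $G$-action on $G\times_K Z$.

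Given this, define the functor $\Bun_K(Z)\to\Bun_G(X)$ by $W\mapsto G\times_K W$, the bundle over $G\times_K Z\cong X$ whose fibre over $[g,z]$ is $\{[g,w]: w\in W_z\}$ with $[gk,w]=[g,kw]$ for $k\in K$; the $G$-action is $g'\cdot[g,w]=[g'g,w]$. One checks this is a well-defined smooth $G$-equivariant vector bundle (again working locally over $Y$, where $W$ is trivialized and the formula is manifestly smooth), and that a $K$-equivariant bundle map $W\to W'$ induces $[g,w]\mapsto[g,\phi(w)]$, so this is genuinely a functor. For the composite $\Bun_K(Z)\to\Bun_G(X)\to\Bun_K(Z)$: restricting $G\times_K W$ to $Z\subset X$ means taking fibres over points $[e,z]$, and $w\mapsto[e,w]$ is the natural $K$-equivariant isomorphism $W\xrightarrow{\;\sim\;}(G\times_K W)\rvert_Z$. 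For the other composite $\Bun_G(X)\to\Bun_K(Z)\to\Bun_G(X)$: given an equivariant bundle $V\to X$, the map $G\times_K(V\rvert_Z)\to V$, $[g,v]\mapsto g\cdot v$ for $v\in V_z$, is well defined since $[gk,v]=[g,kv]\mapsto g\cdot(kv)=(gk)\cdot v$, and it is a $G$-equivariant bundle isomorphism covering $G\times_K Z\cong X$; it is fibrewise linear and bijective because the $G$-action on $V$ is fibrewise linear and $V$ is a bundle. Naturality of both isomorphisms in $W$ and $V$ respectively is immediate from the formulas.

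The main obstacle I anticipate is not the algebra of the two natural isomorphisms, which is formal once $G\times_K Z\cong X$ is in hand, but rather the \emph{smoothness and local triviality} of the associated-bundle construction in the category of manifolds with corners: one must verify that $G\times_K Z$ carries a manifold-with-corners structure compatible with that of $X$, and that $G\times_K W$ is smooth and locally trivial there. This is where the hypothesis that $K$ acts on $Z$ with fixed isotropy type is essential — it guarantees $Z\to Y$ is itself a (corners) fibre bundle with structure group $K/(\text{isotropy})$, so that local slices exist and the quotient by $K$ is well-behaved. I would handle this by reducing everything to a neighbourhood of a point of $Y$: over such a neighbourhood $U$, $X\rvert_U\cong U\times(G/H)$ and $Z\rvert_U\cong U\times S$ for a $K$-slice $S\subset G/H$, and the diffeomorphism $G\times_K Z\cong X$ together with the bundle statements become the corresponding standard facts for the homogeneous-space model, patched over $Y$. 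The boundary-free hypothesis from Section~\ref{Res} ensures these corner structures are consistent under the group action. A remark at the end would note that this equivalence is plainly compatible with direct sums and tensor products, hence descends to the promised ring isomorphism on Grothendieck groups, recovering \eqref{EKR.163} as the special case $Z=X^H$, $K=N(H)$.
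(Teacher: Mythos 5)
Your proposal is correct and takes essentially the same approach as the paper: the paper's quasi-inverse is precisely your associated-bundle construction $W\mapsto G\times_K W$, phrased there as descent of the pull-back of $W$ to $G\times Z$ under the free diagonal $K$-action (using \eqref{11.7.2020.3} to identify the quotient with $X$), and the verification of the two composites is the same. Your extra attention to smoothness over the corners is a more explicit version of the paper's appeal to local product decompositions coming from the unique isotropy type.
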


\begin{proof} If $V$ is a $G$-equivariant bundle over $X$ then its
  restriction to $Z$ is $K$-equivariant. This defines a map
  \eqref{11.7.2020.2} which is clearly functorial with respect to
  equivariant morphisms. Conversely if $U$ is a $K$-equivariant bundle
  over $Z$ then it lifts to $G\times Z$ under projection to the second
  factor. The left $G$-action induces a trivial $G$-action on the lifted
  bundle. The diagonal $K$-action in which $k$ maps $(g,z)$ to
  $(gk^{-1},kz)$ is free, commutes with the $G$-action and is covered by
  the action of $K$ on $U.$ By \eqref{11.7.2020.3} the quotient space given
  by the $K$-action is $X$ and the $K$-action on the bundle gives descent
  data defining a $G$-equivariant bundle. That these bundles and operations
  are smooth follows from the local product decompositions implied by the
  uniqueness of the isotropy types.

Clearly restriction is a left equivalence for this extension.
To see it is a right equivalence set $U=V\big|_{Z}$ for a $G$-equivariant
bundle $V$ on $X.$ Then the action on $V$ gives a bundle map of fibre
isomorphisms 
\begin{equation}
\{g\}\times V_z\longrightarrow V_{gz}
\label{11.7.2020.4}\end{equation}
which is equivariant for the $G$ actions and commutes with the $K$ action
$\{g\}\times V{z}\longrightarrow \{gk^{-1}\}\times V_{kz}$ and so extends
to an equivariant isomorphism of $V$ and the extension of $V\big|_{Z}.$
\end{proof}

Borel reduction corresponds to a choice among the isotropy groups $H\subset
G.$ The normalizer $N(H)=N_G(H)$ acts on $X^H$ with the subgroup $H$ acting
trivially and the quotient group $Q=N_G(H)/H$ acts freely. Thus $X^H$ is
a principal $Q$-bundle over the orbit space $Y=Q\backslash X^H$
\begin{equation}
\xymatrix{
Q\ar@{-}[r]&X^H\ar[d]\\
&Y
}
\label{E-K-n.285}\end{equation}
which is independent of the choice of $H$ up to equivalence.

However, the choice of a particular isotropy group is not natural, in general, and
at points below we need to allow changes of this choice. The closed
subgroups conjugate to $H$ in $G$ may be identified with the homogeneous
space $G/N(H)$ over which the space $X$ fibres with the fibre above
$H_g=gHg^{-1}$ being a principal $Q(H_g)=gQg^{-1}$-bundle. In this more
equivariant picture 
\begin{equation}
\xymatrix{
Q(H_g)\ar@{-}[r]&X\ar[d]\ar[r]^-{\iota}&G/N(H_g)\ar[dl]\\
&Y
}
\label{EKR.89}\end{equation}
the group $G$ still acts as a fibre-preserving map.

It is convenient to consider another setting closely related to
reduction. Thus, suppose that a compact Lie group $N$ acts on a compact
manifold with corners $Z$ with a fixed, hence normal, isotropy group $H.$ A
smooth submanifold $Z'\subset Z$ will be called a \emph{restriction} of the
action if it is a subbundle of $Z$ as a principal $N/H$ bundle with the
subgroup $N'\subset N$ acting fibre-transversally on $Z'$ with isotropy
group $H'=H\cap N'.$ Clearly there is then a restriction functor 
\begin{equation}
\Bun_N(Z)\longrightarrow \Bun_{N'}(Z').
\label{11.8.2020.1}\end{equation}
This functor arises in the connecting maps in the reduced picture of
equivariant K-theory below.

Borel's model for the equivariant cohomology of a $G$-space $X,$ for a
compact Lie group $G,$ is to take a classifying bundle for $G,$ so a
contractible space $EG$ on which $G$ acts freely. Then the product action
on $X\times EG$ is free. Writing the quotient space as $X\times_GEG$  
\begin{equation}
H_G(X)=H(X\times_GEG)
\label{8.10.2020.1}\end{equation}
where we are interested in the case of real coefficients.

\begin{proposition}\label{EKR.171} Under a reduction of a smooth action of
  a compact Lie group, as defined above
\begin{equation}
H_N(Z)=H_G(X)
\label{8.10.2020.5}\end{equation}
\end{proposition}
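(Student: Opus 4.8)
The plan is to express both sides of \eqref{8.10.2020.5} through the Borel construction \eqref{8.10.2020.1} and to identify the two homotopy quotients using the associated-bundle presentation of $X$ that is implicit in the proof of Proposition~\ref{11.7.2020.1}. Write $N$ for the stabilizer of the reduction $Z\subset X$ (the subgroup denoted $K$ in that proposition). The one substantive point is to record that the argument proving Proposition~\ref{11.7.2020.1} exhibits a canonical $G$-homeomorphism
\[
X\;\cong\;G\times_N Z=(G\times Z)\big/N,\qquad n\cdot(g,z)=(gn^{-1},nz),
\]
with $G$ acting by left translation on the first factor: the map $(g,z)\mapsto gz$ is surjective because $Z$ meets every orbit, and by \eqref{11.7.2020.3} its fibres are precisely the orbits of this $N$-action.

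Now fix a classifying bundle $EG$. Restricted to the closed subgroup $N$ the space $EG$ is still contractible with free $N$-action, so it serves simultaneously as a model for $EN$; consequently $H(W\times_N EG)=H_N(W)$ for every $N$-space $W$. The $N$-equivariant inclusion $Z\hookrightarrow X$ together with the identity of $EG$ induces
\[
Z\times_N EG\longrightarrow X\times_N EG\longrightarrow X\times_G EG ,
\]
and, after substituting $X=G\times_N Z$, this composite is the standard homeomorphism identifying $(G\times_N Z)\times_G EG$ with $Z\times_N EG$: concretely it is induced by $(z,e)\mapsto(1,z,e)$, with $1$ the identity of $G$, and its inverse is $[g,z,e]\mapsto[z,g^{-1}e]$; that these are mutually inverse is a short check with the relations defining the quotients. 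Therefore
\[
H_G(X)=H(X\times_G EG)=H(Z\times_N EG)=H_N(Z),
\]
which is \eqref{8.10.2020.5}, and by construction the identification is the restriction map on Borel cohomology induced by $Z\hookrightarrow X$, which is the form in which it is used later.

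I expect no real obstacle beyond bookkeeping: the geometry — the presentation $X\cong G\times_N Z$ — was already done in Proposition~\ref{11.7.2020.1}, and the rest is the purely formal compatibility of the homotopy quotient with induction from a closed subgroup, so in particular the fixed-isotropy-type hypothesis on the reduction plays no further role. The only genuine technicality is that $EG$, and hence $X\times EG$ and $Z\times EG$, are not finite-dimensional manifolds, so Proposition~\ref{11.7.2020.1} cannot be applied verbatim to $X\times EG$; one either runs the elementary quotient argument above directly in the topological category, or replaces $EG$ by its finite-dimensional approximations $E_kG$, applies the manifold statement there, and passes to the limit, which is harmless for cohomology with real coefficients.
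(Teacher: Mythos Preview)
Your proof is correct and is essentially the same as the paper's: both identify $X\times_G EG$ with $Z\times_N EG$ by taking the commuting $G$- and $N$-quotients of $G\times Z\times EG$ in the two possible orders, which is exactly your associated-bundle computation $(G\times_N Z)\times_G EG\cong Z\times_N EG$. Your version is slightly more explicit about the maps and flags the infinite-dimensionality of $EG$, but there is no substantive difference in approach.
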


\begin{proof} Consider the product $G$ action on $G\times Z\times EG$ which
  commutes with the $N$ action recovering $X$ from $G\times Z.$ This has
  $G$ action has the transversal $\{e\}\times Z\times EG$ so the quotient
  may be identified with $Z\times EG.$ Projected to this transversal the
  action of $N$ on $G\times Z\times EG$ is identified with the product
  action on $Z\times EG$ with quotient $Z\times_NEG.$ On the other hand
  taking the quotient by the $N$ action first this is also identified with the
  quotient $X\times_GEG.$ Thus it follows that there is a natural
  identification \eqref{8.10.2020.5} for any reduction of a $G$ action.
\end{proof}

Note that for a reduction the image $gZ$ of $Z$ under any element $g\in G$ is a
reduction with group action by $gNg^{-1}$ and \eqref{8.10.2020.5} extends
to provide natural identifications between the various $H_{gNg^{-1}}(gZ).$

From the naturality of these constructions the $G$-equivariant Chern character
of a $G$ bundle over $X$ can be realized as the $N$-equivariant Chern
character for the restriction of the bundle to a reduction $Z$ and this is
natural with respect to restriction in the Cartan model for equivariant cohomology.

\section{Single Isotropy type}\label{Single}

The resolution of the manifold with group action reduces it to a tree of
compact manifolds with corners on each of which the group acts with all
isotropy groups conjugate; the resolved pieces are linked by equivariant
fibrations over the boundary hypersurfaces. Borel's reduction replaces each
component manifold $X$ by the submanifold $Z=X^H$ where the isotropy group,
$H,$ is fixed with the group replaced by the normalizer, i.e. $H\subset N$
is a normal subgroup. Then $Z$ is the total space of a principal
$Q=N/H$-bundle; by treating the components separately we can assume that the
base, $Y,$ is connected. We proceed to show how equivariant bundles are
reduced to twisted bundles over finite covers of the base.

Let $H'$ be the category of irreducible unitary representations of $H$ with
$M$ the tautological `bundle' over $H'$ with fibre the representation
space. The conjugation action of $N$ on $H$ induces an action of $N$ on
$H'$ with finite orbits, $o(\pi)=N\pi.$ Then over $Z\times o(\pi)$ consider
the trivial bundle obtained by pulling back the tautoligical bundle from
$H',$
\begin{equation}
M'\longrightarrow Z(\pi)=Z\times o(\pi ),
\label{EKR.112}\end{equation}
so with fibre at $\{z\}\times\{\pi '\}$ the representation space $M(\pi')$ of $\pi
'\in o(\pi ).$ Each $n\in N$ conjugates the representation $\pi'$ to $n\pi
'$ and by Schur's Lemma the unitary conjugation is determined up to a
multiple of the identity. This determines a central extension
\begin{equation}
\UU(1)\longrightarrow \vN(\pi)\longrightarrow N.
\label{EKR.76}\end{equation}
Thus $\vN(\pi)$ acts equivariantly on $M'$ covering its action through
$N$ on $Z(\pi).$ We write $\vN(\pi)$ since it is the `opposite' extension, $\hN(\pi),$
given by the dual of the circle bundle \eqref{EKR.76}, which appears in the
decomposition of equivariant bundles.

The action by $N$ on $Z(\pi)$ has isotropy group $H$ which naturally
includes in $\vN(\pi)$ since it acts consistently on each $M(\pi).$ Thus
$Z(\pi)$ is the total space of a principal bundle with base $Y(\pi)$ and
structure group $Q=N/H$ whereas $M'$ is a bundle over $Z(\pi)$ with an
equivariant action of $\vN(\pi).$

We now turn to the decomposition of an $N$-equivariant bundle, $V,$ over $Z$ on
which $N$ acts with fixed isotropy group $H.$ The pointwise action of $H$
induces a finite decomposition into subbundles
\begin{equation}
V=\bigoplus_{\pi \in H'} V(\pi)
\label{EKR.73}\end{equation}
where the action of $H$ on $V(\pi)$ is conjugate to the action on
$\bbC^n\otimes M(\pi )$ for some $n.$ This can be formalized by considering the bundle
over $Z\times H'$ with fibre at $(z,\pi')$ the space of $H$-equivariant homomorphisms
\begin{equation}
W(z,\pi')=\{l:M(\pi')\longrightarrow V_z;l\circ \pi'(h)=h\circ l\}=\hom_H(M(\pi'),V_z,).
\label{EKR.115}\end{equation}
The rigidity of representations implies that these are smooth bundles,
although the dimension of $W(z,\pi')$ may vary between components of $Z.$

\begin{proposition}\label{EKR.116} The fibres \eqref{EKR.115} define a
  smooth bundle bundle $W$ over $Z\times H'$ which carries an equivariant
  action of $\hQ(\pi)=\hN(\pi)/H$ on the restriction $W(\pi)$ to each orbit
  $Z(\pi)$ and is such that summing over the fibres
\begin{equation}
V=(\pi_Z)_*(W\otimes M')
\label{EKR.127}\end{equation}
as $N$-equivariant bundles over $Z.$ This defines an equivalence of
categories 
\begin{equation}
\Bun_N(Z)\longrightarrow \bigoplus_{\pi\in H'/N}\Bun_{\hQ(\pi)}(Z(\pi)).
\label{EKR.128}\end{equation}
\end{proposition}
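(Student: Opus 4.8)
The plan is to produce the equivalence \eqref{EKR.128} by exhibiting the two functors and checking they are mutually (quasi-)inverse, building on the pointwise decomposition \eqref{EKR.73} and the homomorphism-bundles \eqref{EKR.115}. First I would verify that $W$, with fibres $W(z,\pi')=\hom_H(M(\pi'),V_z)$, is in fact a smooth vector bundle over $Z\times H'$: the rigidity of representations of the compact group $H$ ensures the dimension of $\hom_H(M(\pi'),V_z)$ is locally constant, and smoothness of the family follows from the local product structure of the $N$-action near a point of $Z$ (fixed isotropy $H$) together with smoothness of $V$; one realizes $W$ as the image of a smooth family of projections, namely the averaging projection $\frac1{|H|}\int_H \pi'(h)^*\otimes h\,dh$ acting on $\Hom(M(\pi'),V_z)$. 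Next I would check that the canonical evaluation map $W\otimes M'\to V$, $l\otimes m\mapsto l(m)$, over $Z(\pi)$ is $H$-equivariant (with $H$ acting trivially on $W$ by construction and via $\pi'$ on $M'$ matched against its action on $V$), and that summing over $\pi'\in o(\pi)$ and over the orbits $\pi\in H'/N$ reproduces the full decomposition \eqref{EKR.73}; this gives the pushforward formula \eqref{EKR.127} as $N$-equivariant bundles.

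The key structural point is the group action on $W(\pi)$. For $n\in N$ the conjugation carries $\pi'$ to $n\pi'$, and Schur's lemma gives a unitary intertwiner $M(\pi')\to M(n\pi')$ unique up to a scalar; this is exactly the data of the central extension $\vN(\pi)$ of \eqref{EKR.76}, and the action of $V$ along the orbit conjugates $W(z,\pi')$ to $W(nz,n\pi')$ by $l\mapsto n\circ l\circ(\text{intertwiner})^{-1}$. Because the ambiguous scalars cancel in the opposite extension, this descends to a genuine (not merely projective) action of $\hN(\pi)$ on $W(\pi)$, and since $H$ acts trivially on $W$ (each $h$ multiplies $l$ by an intertwiner of $\pi'$ with itself, i.e.\ by a scalar that is absorbed), the action factors through $\hQ(\pi)=\hN(\pi)/H$. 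I would record that $M'$ carries the complementary $\vN(\pi)$-action, so that $W\otimes M'$ has a well-defined $N$-action (the circle ambiguities in $\hN(\pi)$ and $\vN(\pi)$ being mutually inverse), matching the $N$-action on $V$ under \eqref{EKR.127}.

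To get the equivalence of categories, the functor in one direction sends $V\mapsto \bigoplus_\pi W(\pi)$ with the $\hQ(\pi)$-actions just described, and morphisms go to the induced maps on $\hom_H(M(\pi'),-)$; this is manifestly functorial. The inverse functor sends a tuple $(W(\pi))_{\pi\in H'/N}$ to $\bigoplus_\pi (\pi_Z)_*(W(\pi)\otimes M')$ with its assembled $N$-action. That these are mutually inverse up to natural isomorphism is then the classical Peter–Weyl / isotypic-decomposition statement fibrewise: $\hom_H(M(\pi'),W_z\otimes M(\pi'))\cong W_z$ canonically, and conversely $V_z\cong\bigoplus_{\pi'}\hom_H(M(\pi'),V_z)\otimes M(\pi')$, both naturally in the respective morphisms; one checks the natural isomorphisms are $\hQ(\pi)$-, resp.\ $N$-, equivariant, which follows by tracking the Schur intertwiners. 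I expect the main obstacle to be bookkeeping rather than conceptual: namely verifying \emph{equivariance} of the fibrewise canonical isomorphisms, i.e.\ that the unitary cocycle coming from Schur's lemma is exactly the one defining $\hN(\pi)$ and $\vN(\pi)$ and that the two occurrences cancel, so that no residual twisting survives on $V$ itself while the correct twisting $\alpha(\pi)$ (the transgressed Mackey class) appears on each $\Bun_{\hQ(\pi)}(Z(\pi))$. Smoothness of all the constructions is handled uniformly by the local product decomposition guaranteed by the fixed isotropy type, exactly as in Proposition~\ref{11.7.2020.1}.
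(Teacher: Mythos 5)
Your proposal is correct and follows essentially the same route as the paper: realizing $W$ as $\hom_H(M',V')$, using Schur's lemma to produce the opposite central extension acting on the dual so that $W(\pi)$ carries a genuine $\hQ(\pi)$-action, and recovering $V$ from $W\otimes M'$ by cancellation of the two extensions followed by pushforward. The paper's proof is terser but contains no additional ideas; your extra detail on smoothness (the averaging projection) and on the explicit inverse functor is a reasonable filling-in of what the paper leaves implicit.
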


\begin{proof} The given equivariant bundle $V$ decomposes as in
  \eqref{EKR.73} and so lifts to a bundle $V'$ over $Z\times H'$ where the
  fibre at $(z,\pi)$ is the image of $W(z,\pi).$ Then 
\begin{equation}
W=\hom_H(M',V')=V'\otimes_H(M')^*
\label{EKR.129}\end{equation}
is the bundle of $H$-equivariant homomorphisms over $Z\times H'.$ Since the
$\vN(\pi)$ action on $M'$ induces an $\hN(\pi)$ action on the dual, $W$ has
(over each orbit $Z(\pi))$ an action of $\hN(\pi)$ on the second
factor and of $N$ on the first. Combined this gives an equivariant action
of $\hN(\pi)$ in which $H$ acts trivially. Thus $W$ carries an equivariant
action of $\hQ(\pi).$ Then $V'$ is recovered as the bundle $W\otimes M'$
over $Z\times H'$ in which the extensions in the $\hN(\pi)$ and $\vN(\pi)$
actions cancel to give an $N$-equivariant bundle which pushes forward,
i.e.\ sums over the finite number of representations present, to give $V.$ 

This relationship is an equivalence of categories.
\end{proof}

If $W$ was actually $Q$-equivariant its restriction to $Z(\pi)$
would descend to a bundle over the base, $Y(\pi),$ for the action of $Q.$
Since the action is only projective it becomes a twisted bundle over
$Y(\pi).$ We treat these in the context of \emph{gerbe modules} which we
now recall. The central extension of $Q(\pi)$ induces a lifting bundle
gerbe, in the sense of Murray \cite{Murray}, over $Y(\pi):$
\begin{equation}
\xymatrix{
&&C(\pi)\ar[d]\ar@{-->}[r]&\hat Q(\pi)\ar[d]\\
Q(\pi)\ar@{-}[r]&Z\ar[d]&\ar@<2pt>[l]\ar@<-2pt>[l]
Z^{[2]}\ar[dl]\ar[r]&Q(\pi)\\
&Y(\pi)
}
\label{E-K-n.274}\end{equation}
where $C(\pi)$ is the pull-back of the circle giving the central extension.

This gerbe is classified by its (torsion) Dixmier-Douady class
$\alpha(\pi)\in H^3(Y(\pi);\bbZ)$ which in this case is the transgression
of the Mackey class of the extension. A vector bundle $W(\pi)$ over
$Z(\pi)$ with a $\hat Q(\pi)$-action covering the $Q(\pi)$-action is a
gerbe module in the sense of \cite{MR1911247} since the induced
isomorphisms over each point $(p,q)\in Z^{[2]}$
\begin{equation}
\gamma _{p,q}:W_p\otimes C(\pi)_{p,q}\longrightarrow W_q
\label{E-K-n.277}\end{equation}
are consistent with the gerbe structure. Thus:

\begin{proposition}\label{E-K-n.276} A $G$-equivariant vector bundle on
$X,$ for an action with unique isotropy type, induces a gerbe module over
$Y(\pi)$ for each of the lifting bundle gerbes \eqref{E-K-n.274}, trivial
outside a finite subset of $H'/N$ and conversely; $G$-equivariant bundle
isomorphisms correspond uniquely to $\hat Q(\pi)$-equivariant bundle
isomorphisms, and hence gerbe module isomorphisms resulting in an
equivalence of categories
\begin{equation}
\Bun_G(X)=\bigoplus_{\pi\in H'/N}\Bun(Y(\pi);\alpha (\pi )).
\label{EKR.118}\end{equation}
\end{proposition}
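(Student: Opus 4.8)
The plan is to string together the equivalences of categories already established in Propositions \ref{11.7.2020.1}, \ref{EKR.116}, and the gerbe-module discussion, so that the content of this proposition is essentially a repackaging rather than a new computation. First I would reduce from $X$ with $G$-action of unique isotropy type to $Z=X^H$ with $N$-action of fixed isotropy group $H$, invoking Proposition \ref{11.7.2020.1} (Borel's reduction is the case $K=N$, $Z=X^H$ there), which gives $\Bun_G(X)\simeq\Bun_N(Z)$ as categories, functorially in equivariant morphisms. Next I would apply Proposition \ref{EKR.116} to get $\Bun_N(Z)\simeq\bigoplus_{\pi\in H'/N}\Bun_{\hQ(\pi)}(Z(\pi))$, where on the right we have, for each orbit, a vector bundle $W(\pi)$ over the principal $Q(\pi)$-bundle $Z(\pi)\to Y(\pi)$ carrying a compatible $\hQ(\pi)$-action. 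Only finitely many summands are nontrivial since $V$ decomposes as a finite sum \eqref{EKR.73} over the representations actually occurring.

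The remaining step is to identify, for each orbit, the category $\Bun_{\hQ(\pi)}(Z(\pi))$ of $\hQ(\pi)$-equivariant bundles on $Z(\pi)$ with the category $\Bun(Y(\pi);\alpha(\pi))$ of twisted bundles (gerbe modules) for the lifting bundle gerbe \eqref{E-K-n.274}. This is the standard dictionary between equivariant bundles for a central extension acting on a principal bundle and modules over the associated lifting bundle gerbe: given $W(\pi)$ with its $\hQ(\pi)$-action, the isomorphisms \eqref{E-K-n.277} $\gamma_{p,q}:W_p\otimes C(\pi)_{p,q}\to W_q$ over $Z(\pi)^{[2]}$ satisfy the cocycle condition over $Z(\pi)^{[3]}$ precisely because they come from a genuine group action, making $W(\pi)$ a gerbe module; conversely a gerbe module is exactly such a descent datum and reconstructs the equivariant bundle on $Z(\pi)$. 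Morphisms match on both sides since a $\hQ(\pi)$-equivariant bundle map is the same as a morphism of descent data, i.e.\ a gerbe-module morphism relative to the given trivialization data. The Dixmier--Douady class of this gerbe is the torsion class $\alpha(\pi)\in H^3(Y(\pi);\bbZ)$, the transgression of the Mackey class, so the category genuinely depends on the extension and not merely on $Y(\pi)$. Composing the three equivalences yields \eqref{EKR.118}, and the final clause about bundle isomorphisms is just the statement that each arrow in the chain is an equivalence on morphisms, hence so is the composite.

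I do not expect a serious obstacle here; the one point requiring care is \emph{smoothness} and the fact that the rank of $W(z,\pi')$ may jump between components of $Z$ (noted after \eqref{EKR.115}), so that the bundles $W(\pi)$ are honest smooth bundles over each component of $Z(\pi)$ but need not have globally constant rank — this is handled exactly as in Proposition \ref{EKR.116}, using the local product structure coming from the uniqueness of isotropy type, and does not affect the categorical statement. A second bookkeeping point is the direction of the extension: it is the opposite extension $\hQ(\pi)$, not $\vQ(\pi)$, that acts on $W(\pi)=\hom_H(M',V')$, because $W$ is built from the dual of $M'$; this was already established in the proof of Proposition \ref{EKR.116} and simply needs to be carried through consistently into the gerbe picture so that the Dixmier--Douady class is $+\alpha(\pi)$ as stated.
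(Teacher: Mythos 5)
Your proposal is correct and follows essentially the route the paper intends: Proposition~\ref{E-K-n.276} is stated as a consequence of composing the reduction equivalence of Proposition~\ref{11.7.2020.1}, the decomposition of Proposition~\ref{EKR.116}, and the identification (via \eqref{E-K-n.274}--\eqref{E-K-n.277}) of $\hQ(\pi)$-equivariant bundles on the principal bundle $Z(\pi)\to Y(\pi)$ with gerbe modules for the lifting bundle gerbe. Your added remarks on finiteness, on the possible jump of rank between components, and on keeping track of $\hQ(\pi)$ versus $\vQ(\pi)$ are consistent with the paper's discussion and do not change the argument.
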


Thus, each $G$-equivariant bundle over $X$ induces gerbe modules over a
finite collection of the lifting gerbes \eqref{E-K-n.274} with $\pi$ lying
in different orbits for the action of $N$ on $H'.$ Different choices of
base point in the orbits induce gerbe module isomorphisms.

\begin{theorem}[A. Wassermann]\label{E-K-n.275} For the action of a compact
  group $G$ on a compact manifold $X$ with unique isotropy type, the
  equivariant K-theory can be identified with the collective twisted
  K-theory with coefficients in the representation ring of the isotropy group
\begin{equation}
K_G(X)=K(Y(\pi);\alpha(\pi))\otimes_{H'/N} \cR(H)
\label{E-K-n.278}\end{equation}
\end{theorem}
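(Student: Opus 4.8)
The plan is to deduce Wassermann's identification \eqref{E-K-n.278} directly from the categorical equivalence \eqref{EKR.118} of Proposition~\ref{E-K-n.276}, by passing to Grothendieck groups. The essential content has already been assembled: Proposition~\ref{11.7.2020.1} (or its Borel-reduction special case \eqref{EKR.163}) reduces $\Bun_G(X)$ to $N$-equivariant bundles over the principal $Q$-bundle $Z\to Y$; Proposition~\ref{EKR.116} decomposes these, via the isotypical splitting \eqref{EKR.73} and the homomorphism bundles \eqref{EKR.115}, into the finite direct sum $\bigoplus_{\pi\in H'/N}\Bun_{\hQ(\pi)}(Z(\pi))$; and the gerbe-module discussion identifies each $\hQ(\pi)$-equivariant bundle over $Z(\pi)$ with a twisted bundle in $\Bun(Y(\pi);\alpha(\pi))$, the twisting being the transgressed Mackey class. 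So \eqref{EKR.118} is in hand, and the theorem is what it yields on isomorphism classes.

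First I would recall that an exact (or at least additive, direct-sum-preserving) equivalence of symmetric monoidal categories of vector bundles induces an isomorphism of the associated Grothendieck $K$-groups, and that a finite direct sum of categories passes to a direct sum of $K$-groups; applying this to \eqref{EKR.118} gives $K_G(X)\cong\bigoplus_{\pi\in H'/N}K(Y(\pi);\alpha(\pi))$ as abelian groups, which is exactly \eqref{EKR.105}. Next I would account for the module structure that turns this into the tensor-product statement \eqref{E-K-n.278}: the representation ring $\cR(H)$ acts on $K_G(X)$ by tensoring an equivariant bundle with a representation of $H$ pulled back to $X$ (the analogue of the transfer map \eqref{EKR.111}), and under the decomposition this action permutes and regrades the summands according to how $H'$ decomposes tensor products. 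Writing $\cR(H)=\bigoplus_{\pi\in H'}\bbZ\pi$ as a free $\bbZ$-module on the irreducibles, each orbit $H'/N$ contributes, and the collective group $\bigoplus_{\pi\in H'/N}K(Y(\pi);\alpha(\pi))$ is precisely the ``induced'' object $K(Y(\pi);\alpha(\pi))\otimes_{H'/N}\cR(H)$ — that is, the $\cR(H)$-module obtained by assigning to each orbit representative the corresponding twisted $K$-group and extending $\cR(H)$-linearly. I would make the bookkeeping in \eqref{E-K-n.278} precise by noting that the notation $\otimes_{H'/N}$ records exactly this orbit-indexed sum together with the $\cR(H)$-action, so that the ring structure on the left (tensor product of equivariant bundles) matches the evident one on the right (induced from the twisted $K$-theory product and multiplication in $\cR(H)$, with twistings adding).

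I expect the main obstacle to be not the existence of the isomorphism — that is essentially formal given \eqref{EKR.118} — but the verification that it respects \emph{ring}, not merely group, structure, together with spelling out the meaning of $\otimes_{H'/N}$ so that the statement is unambiguous. The subtlety is that the tensor product of two equivariant bundles mixes isotypical components nontrivially (via Clebsch–Gordan decompositions for $H$), and the gerbe twistings interact: the Dixmier–Douady class of a tensor product of gerbe modules over $Y(\pi)$ and $Y(\pi')$ lives over a fibre product and one must check it matches $\alpha$ of the orbits appearing in the decomposition of $\pi\otimes\pi'$. Handling this cleanly amounts to observing that the central extensions \eqref{EKR.76} are multiplicative in $\pi$ up to the coherence built into the gerbe-module formalism of \cite{MR1911247}, so that the equivalence \eqref{EKR.118} is in fact monoidal; once that is recorded, passing to $K$-theory gives \eqref{E-K-n.278} as rings. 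A remark reconciling this with the purely group-theoretic \eqref{EKR.105} and with Wassermann's original $C^*$-algebraic formulation in \cite{Wassermann} would complete the argument.
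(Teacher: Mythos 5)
Your proposal is correct and follows the same route as the paper: the paper's proof is exactly the two-step argument of passing to Grothendieck groups in the equivalence \eqref{EKR.118} of Proposition~\ref{E-K-n.276} and invoking the gerbe-module characterization of twisted K-theory from \cite{MR1911247}. Your additional care about the meaning of $\otimes_{H'/N}$ and the compatibility of the ring structures (via Clebsch--Gordan mixing of isotypical components and additivity of the twistings) goes beyond what the paper records, but is consistent with it.
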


\begin{proof} This is a direct consequence of Proposition~\ref{E-K-n.276},
  by passing to the corresponding Grothendieck groups, and the
  characterization of twisted K-theory in terms of gerbe modules by
  Bouwknegt, Carey, Varghese, Murray and Stevenson in \cite{MR1911247}.
\end{proof}

Although \eqref{EKR.118} gives a complete description of equivariant
bundles in this case, the twisted bundles can be further reduced, in the
sense of \S\ref{Red}, and this is relevant to the discussion of pull-back
maps below.

In the action of $N$ on $H'$ for a normal subgroup $H,$ consider the
stabilizer $N(\pi)$ and its normalizer $\Gamma (\pi ).$ The submanifold 
\begin{equation}
Z'(\pi )=Z\times \Gamma (\pi )\pi \subset Z(\pi )=Z\times N\pi  
\label{EKR.119}\end{equation}
is a reduction of $Z(\pi)$ for the action of $\hQ(\pi)$ which is reduced to
the action of $\hQ'(\pi )=\hGa(\pi )/H$ where $\hGa(\pi)$ is the central
extension of $\Gamma(\pi)$ induced by $\hN(\pi).$ 

As noted in the Introduction, and proved already by Cartan, the equivariant
cohomology for a free action is the cohomology of the base. This can be
seen directly in terms of the Borel model and there is a corresponding
argument for the Cartan model using a connection, see the book of Guillemin
and Sternberg, \cite{Guillemin-Sternberg}. Namely if $\alpha$ is a
connection on $X$ as a principal $G$-bundle over $Y$ then the trace
functional on $\mathfrak{g}\otimes \mathfrak{g}^*$ defines a linear map
\begin{equation}
\CI(X;\Lambda ^*\otimes S(\mathfrak{g}^*))\ni u\longrightarrow
Au=\Tr_{\mathfrak{g}}(\alpha \wedge u)\in\CI(X;\Lambda ^*\otimes S(\mathfrak{g^*})).
\label{EKR.154}\end{equation}
Here $S(\mathfrak{g}^*)$ is realized as the totally symmetric polynomials
with the trace acting in the first variable. This lowers the polynomial
order by one. Acting on $G$ -invariant forms which are homogeneous of degree $p$ as
polynomials on $\mathfrak g$ 
\begin{equation}
d_G(Au)-Ad_G+pu=pB_pu
\label{EKR.155}\end{equation}
where $B_p$ is given in terms of contraction with the curvature 2-form of
$\alpha$ and is of degree $p-1$ as a polynomial. Repeated application of
$B_p$ to the terms of highest positive degree, eventually stabilizes on
each form, replacing an equivariant form by a basic one as discussed by
Meinrenken \cite{Meinrenken-Enc}. 

The same approach can be used to study the Cartan cohomology in the case of
an action with single isotropy type. It follows from the relationship
between the Borel and Cartan models, see \cite[Chap 4]{Guillemin-Sternberg},
that under Borel reduction the restriction map
\begin{equation}
\CI(X;\Lambda ^*\otimes S(\mathfrak{g}^*))^G\longrightarrow
\CI(X^H;\Lambda ^*\otimes S(\mathfrak{n}^*))^N
\label{EKR.156}\end{equation}
induces an homotopy equivalence of complexes realizing the isomorphism of
$H^*_G(X)$ and $H^*_N(X^H)$ where $N=N(H)$ is the normalizer.

To see directly that the restriction, both on the manifold and the Lie
algebra, gives an isomorphism the contraction operator $A$ in
\eqref{EKR.154} given in terms of a connection form, can be replaced by
contraction with a `partial connection'. For convenience, give the group an
invariant metric. By such a partial connection form we mean a $G$-equivariant
1-form on $X,$ with values in $\mathfrak g,$ which restricted to the
tangent space to the orbit at each point restricts to the identity on the
orthocomplement of the Lie algebra of the normalizer of the isotropy group
at that point. For the homogeneous space $G/H$ such a form can be
constructed by orthogonal projection of the Maurier-Cartan form and in
general by using a $G$-invariant metric on $X$ to project the orthogonal
subspace to the Lie algebra of the normalizer in the tangent space to the
orbits. Application of the analogue of \eqref{EKR.154} lowers the
polynomial order pointwise in this subspace and gives a homotopy
equivalence for corresponding to \eqref{EKR.156}.

In the reduced case written as the action by $N$ on $Z$ with fixed isotropy
group $H$ the conjugation action of $N$ on the Lie algebra $\mathfrak{h}$
induces a locally trivial equivariant action on $Z\times
(S^*(\mathfrak{h^*})^H)$ defining a flat bundle $S_H$ over the base.

\begin{proposition}\label{EKR.158} If $N$ acts on a compact manifold $Z$
  with unique isotropy group $H$ and base $Y,$ a choice of invariant inner product
  on $N$ and a connection on $Z$ as the total space of a principal $Q=N/H$ bundle
  defines a homotopy equivalence intertwining the equivariant deRham
  differential on $Z$ and the deRham differential on $Y$ with coefficients
\begin{equation}
B^\infty :\left(\CI(Z;\Lambda ^*\otimes S(\mathfrak{n}^*)\right)^N\longrightarrow
\CI(Y;\Lambda
^*\otimes S_H).
\label{EKR.159}\end{equation}
\end{proposition}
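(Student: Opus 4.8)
The plan is to reduce Proposition~\ref{EKR.158} to the free-action case already sketched in the text, by the same device used in Proposition~\ref{EKR.171}: lift the $N$-action on $Z$ to a free $N$-action on an auxiliary space and run the Cartan-model homotopy there, then descend. Concretely, since $N$ acts on $Z$ with fixed isotropy group $H$, the space $Z$ is the total space of a principal $Q=N/H$-bundle over $Y$, and the chosen connection on $Z\to Y$ together with the invariant inner product on $\mathfrak n$ produces a \emph{partial connection form} in the sense discussed above: a $G$-equivariant (here $N$-equivariant) $\mathfrak n$-valued $1$-form on $Z$ which on the orbit tangent space at each point restricts to the identity on the orthocomplement of $\mathfrak h$. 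This is exactly the input needed to define, by the analogue of \eqref{EKR.154}, an operator $A$ on $\left(\CI(Z;\Lambda^*\otimes S(\mathfrak n^*))\right)^N$ lowering the polynomial degree in the complementary directions to $\mathfrak h$.

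First I would write down $A$ explicitly and establish the analogue of the identity \eqref{EKR.155}, namely that on $N$-invariant forms homogeneous of polynomial degree $p$ in the $\mathfrak q=\mathfrak n/\mathfrak h$-directions one has $d_N(Au)-Ad_Nu+pu=pB_pu$, where $B_p$ is contraction with the curvature of the partial connection and is of lower polynomial degree. The point is that the partial connection kills precisely the $\mathfrak q$-polynomial dependence while leaving the $S^*(\mathfrak h^*)^H$-part untouched; so iterating $B_p$ on the top-degree terms stabilizes form-by-form (the argument of Meinrenken \cite{Meinrenken-Enc} applies verbatim once one checks the polynomial degree strictly drops), yielding $B^\infty$ as a chain homotopy equivalence onto the subcomplex of forms with no positive $\mathfrak q$-polynomial degree. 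Second I would identify that subcomplex: an $N$-invariant form on $Z$ with values in $S^*(\mathfrak h^*)^H$ and no $\mathfrak q$-polynomial part is the same as a form on $Y$ with values in the flat bundle $S_H=Z\times_Q\bigl(S^*(\mathfrak h^*)^H\bigr)$, the basic forms pulling back injectively from $Y$ because $Z\to Y$ is a principal $Q$-bundle; here one uses that $Q$ is compact so that $H^*_N(Z)=H^*(Y;S_H)$ and the equivariant differential descends to the twisted de Rham differential on $\CI(Y;\Lambda^*\otimes S_H)$. Combining, $B^\infty$ of \eqref{EKR.159} is the composite of the stabilization homotopy equivalence with this identification, and it intertwines the two differentials by construction.

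The main obstacle is the bookkeeping in the first step: one must separate, inside $S(\mathfrak n^*)$, the part polynomial in the $\mathfrak h^*$-directions (which should survive) from the part polynomial in the $\mathfrak q^*$-directions (which $A$ is designed to eliminate), and verify that the commutator identity respects this splitting — i.e.\ that $B_p$ does not reintroduce $\mathfrak q$-polynomial degree and that the $S^*(\mathfrak h^*)^H$-factor is genuinely inert under $A$, $d_N$ and the contractions. This hinges on the partial connection being the identity \emph{only} on the orthocomplement of $\mathfrak h$ in the orbit directions, together with $N$-invariance forcing all coefficient polynomials to lie in the $H$-invariants; once that compatibility is pinned down the stabilization and descent are routine, following \cite{Guillemin-Sternberg} and \cite{Meinrenken-Enc} and the model computation \eqref{EKR.154}--\eqref{EKR.155} already displayed.
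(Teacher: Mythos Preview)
Your argument is essentially the paper's own proof: interpret the $Q$-connection on $Z$ as an $\mathfrak n$-valued partial connection via the orthogonal splitting $\mathfrak n=\mathfrak h\oplus\mathfrak q$, apply the commutator identity \eqref{EKR.155} to lower the $\mathfrak q$-polynomial degree by iteration of $B_p$, and then identify the stabilized subcomplex of basic $Q$-invariant forms with values in $S^*(\mathfrak h^*)^H$ with $\CI(Y;\Lambda^*\otimes S_H)$. The opening reference to Proposition~\ref{EKR.171} is a slight misdirection --- that proposition concerns the Borel model, and you do not actually lift to an auxiliary free $N$-space anywhere in the body of your argument --- but the substantive steps you carry out are exactly those of the paper, only spelled out in more detail (in particular your discussion of the ``main obstacle'' makes explicit the splitting of $S(\mathfrak n^*)$ that the paper leaves implicit).
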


\begin{proof} The given connection form $\alpha\in\CI(Z;\Lambda
  ^1\otimes\mathfrak{q})$ for $Z$ as a principal $Q$ bundle can be
  interpreted as a form with values in $\mathfrak{n}$ using the embedding
  of $\mathfrak{q}\subset\mathfrak{n}$ as the space orthogonal to
  $\mathfrak{h}.$ Then the identity \eqref{EKR.155} allow a successive
  lowering of the polynomial order in $\mathfrak{q}$ of Cartan forms on $Z$
  so identifying the equivariant cohomology with the deRham cohomology of
  basic $Q$-invariant sections of $\CI(Z;\Lambda ^*\otimes
  (S^*(\mathfrak{h^*})^H)).$ The action of $Q$ is locally trivial and
  defines the descent to the cohomology of $Y$ with coefficients in $S_H.$
\end{proof}

For the extension to a general action below it is important that
\eqref{EKR.159} defines an explicit homotopy equivalence.

In this reduced case of an action with single isotropy group, an
equivariant bundle $V$ decomposes as a $\hQ(\pi)$-equivariant bundle
$W(\pi)$ over each $Z(\pi).$ the Chern character can be see directly in
terms of Proposition~\ref{EKR.158}. Namely, given an equivariant
connection, the $\hQ(\pi)$-equivariant Chern character of $W(\pi)$ is a
$Q(\pi)$-equivariant form on $Z(\pi).$ The character map given by
the trace defines a section of the trivial polynomial bundle
\begin{equation}
\tr:H'\longrightarrow (S(\mathfrak h^*))^H
\label{EKR.164}\end{equation}
over $Z(\pi)$ which is invariant under the action of $Q$
projected from the action of $\vQ(\pi)$ on $M'.$ The form and section
combine to give a form over the quotient, $Y(\pi)$ with values in the flat
descended bundle and this further pushes forward, summing over the fibres
of each $Y(\pi),$ and over $H'/N,$to give the closed form
\begin{equation}
\Ch(V)\in\CI(Y;\Lambda ^{\ev}\otimes S_H).
\label{EKR.165}\end{equation}
This descends to the equivariant Chern character in $H_N(Z)=H_G(X).$

The \emph{delocalized} equivariant cohomology in this case of a single isotropy group
is obtained by replacing the character ring by the real representation ring
$\cR(H).$ As a vector space this is freely generated by $H'.$ Thus there is
a linear subspace $o(\pi)\otimes\bbR$ associated with each orbit of the
action of $Q$ on $H'$ which can be viewed as the push forward under the map
from $o(\pi)$ to a point. The action of $Q$ on $Z\times o(\pi)$ therefore
generates a flat real line bundle over $Y(\pi)$ which pushes forward to $Y$
to define a flat bundle. The formal sum over $H'/N$ of these spaces is the
flat bundle $\cR_H(Y).$

\begin{definition}\label{EKR.167} The
\emph{delocalized equivariant cohomology,}
$H^*_{G,\dl}(X),$ of a manifold with smooth action, with unique isotropy
type, by a compact Lie group is the deRham cohomology of the base with
coefficients in $\cR_H(Y).$
\end{definition}

Note that the forms appearing here, in $\CI(X/G;\Lambda ^*\otimes
\cR_H(Y)),$ are each required to have coefficients in a fixed finite span of the
subspaces corresponding to the orbits of the normalizer of an isotropy
group on its representation ring. This space of forms is functorial under
change of isotropy group as in \eqref{EKR.89}.

\begin{theorem}[Following \cite{MR86g:55006}]\label{EKR.168} For the action
  of a compact Lie group $G$ on a compact manifold $X$ with unique isotropy
  type, the equivariant Chern character factors through a natural
  `delocalized Chern character' as in \eqref{EKR.98}
\begin{equation}
\xymatrix{
K^0_G(M)\ar[rr]^{\Ch_{\dl}}\ar[dr]_{\Ch}&&H^{\ev}_{G,\dl}(M)\ar[dl]^{\loc}\\
&H^{\ev}_{G}(M)
}
\label{EKR.169}\end{equation}
with the top map an isomorphism after tensoring with $\bbR.$
\end{theorem}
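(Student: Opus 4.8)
The plan is to assemble the diagram \eqref{EKR.169} from the pieces already constructed for the single-isotropy-type case and then verify commutativity and the rationalized isomorphism orbit by orbit over $H'/N$. First I would define $\Ch_{\dl}$ using Proposition~\ref{E-K-n.276} and the properties of torsion-twisted K-theory. Given $V\in\Bun_G(X)$, decompose it via \eqref{EKR.118} into gerbe modules $W(\pi)$ over the covers $Y(\pi)$; since each Dixmier--Douady class $\alpha(\pi)$ is torsion, the twisted Chern character of $W(\pi)$ is an ordinary deRham class on $Y(\pi)$, and pushing forward under $Y(\pi)\to Y$ and summing over $\pi\in H'/N$ lands in $\CI(Y;\Lambda^{\ev}\otimes\cR_H(Y))$ by the same mechanism that produced \eqref{EKR.165}, except that now one keeps the full real span $o(\pi)\otimes\bbR$ of the orbit rather than collapsing it via the trace section \eqref{EKR.164}. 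This passes to Grothendieck groups because the twisted Chern character is additive and respects the gerbe-module isomorphisms coming from different base points in the orbits. Closedness of the resulting form follows from closedness of each twisted Chern character together with flatness of the coefficient bundle $\cR_H(Y)$, so $\Ch_{\dl}$ is well-defined with values in $H^{\ev}_{G,\dl}(X)$.

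Next I would identify the localization map $\loc$: it is the map on coefficient bundles induced by the character homomorphism $\cR(H)\to(S(\mathfrak h^*))^H$, i.e.\ sending a representation to the polynomial $\tr$ of \eqref{EKR.164}, which is $Q$-equivariant and hence descends to a bundle map $\cR_H(Y)\to S_H$ over $Y$; applying it coefficient-wise gives $\loc:H^{\ev}_{G,\dl}(X)\to H^{\ev}_G(X)=H^{\ev}_N(Z)$ via Proposition~\ref{EKR.158}. Commutativity of \eqref{EKR.169} is then essentially a bookkeeping statement: the construction of $\Ch(V)$ recalled just before \eqref{EKR.165} is literally $\loc$ applied to the construction of $\Ch_{\dl}(V)$ — the $\hQ(\pi)$-equivariant Chern character form on $Z(\pi)$ is the same in both, and the only difference is whether one contracts with the trace section (giving $\Ch$) or retains the orbit line (giving $\Ch_{\dl}$), after which one pushes forward identically. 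So $\loc\circ\Ch_{\dl}=\Ch$ by construction.

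The substantive point is that $\Ch_{\dl}\otimes\bbR$ is an isomorphism. Here I would argue orbit by orbit: $H^{\ev}_{G,\dl}(X)\otimes\bbR$ decomposes as $\bigoplus_{\pi\in H'/N}H^{\ev}(Y;\,(\text{pushforward of }o(\pi)\otimes\bbR))$, and by \eqref{E-K-n.278} the domain decomposes as $\bigoplus_{\pi\in H'/N}K(Y(\pi);\alpha(\pi))\otimes\bbR$ (the representation-ring tensor factor being exactly the orbit line). For a fixed orbit the claim reduces to: the twisted Chern character $K(Y(\pi);\alpha(\pi))\otimes\bbR\to H^{\ev}(Y(\pi);\bbR)$ is an isomorphism onto the part of the cohomology of the finite cover $Y(\pi)$ that descends under pushforward to the relevant flat line on $Y$ — this is the Atiyah--Hirzebruch isomorphism for torsion-twisted K-theory (for torsion twists the twisted Chern character is rationally an isomorphism onto ordinary rational cohomology, cf.\ \cite{MR1911247}), combined with the transfer isomorphism for the finite covering $Y(\pi)\to Y$ projected onto the isotypical summand. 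I expect this last identification — matching the image of the twisted Chern character on the cover with the correct flat-bundle-twisted cohomology downstairs, and checking that the pushforward is injective on that summand — to be the main obstacle; the rest is formal. Once each orbit summand is handled, summing over the finite set $H'/N$ gives the theorem.
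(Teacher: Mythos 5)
Your proposal is correct and follows essentially the same route as the paper: the same construction of $\Ch_{\dl}$ via the twisted Chern characters of the gerbe modules $W(\pi)$ pushed forward from the covers $Y(\pi)$, the same identification of $\loc$ with the trace/character map on coefficients, and the same reduction of the rational isomorphism to the Atiyah--Hirzebruch isomorphism for torsion-twisted K-theory applied orbit by orbit over $H'/N$. The step you flag as the ``main obstacle'' is in fact standard (for a finite cover the cohomology of the base with coefficients in the pushforward of the trivial flat line bundle is the cohomology of the cover), and is exactly what the paper uses implicitly; the only detail the paper adds that you cite rather than prove is the tensor-power argument giving rational injectivity of the torsion-twisted Chern character.
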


\noindent So this is an Atiyah-Hirzebruch isomorphism.

\begin{proof} As in the discussion of the standard equivariant Chern
  character above, an equivariant K-class is represented by a formal
  difference $V\ominus\bbC^N$ where the trivial bundle contributes a
  constant form. The decomposition \eqref{E-K-n.298} yields the bundle
  $W(\pi)$ over each of the principal $Q$ bundles $Z(\pi).$ A
  $\hQ$-invariant connection on $W(\pi)$ yields a $Q$-invariant form on
  $Z(\pi).$ Tensored with the canonical section on the trivial
  $\bbR$-bundle over $H'$ with its $Q$ action this descends to a form with
  values in the flat line bundle over the quotient $Y(\pi)$ and then pushes
  forward to a closed element of $\CI(Y;\Lambda ^{\ev}\otimes \cR_H.)$ The
  deRham class of this form is well-defined, giving the map $\Ch_{\dl}$ in
  \eqref{EKR.169}. The trace map on representations induces the
  localization map completing the commutative diagram.

For torsion-twisted K-theory there is an Atiyah-Hirzebruch isomorphism 
\begin{equation}
\Ch:K(Y;\alpha )\otimes\bbR\longrightarrow H^{\ev}(Y;\bbR).
\label{EKR.182}\end{equation}
The crucial injectivity of this map follows from the multiplicative
property of twisted K-theory  
\begin{equation}
K(Y;\alpha )\times K(Y;\beta)\longrightarrow K(Y;\alpha +\beta )
\label{EKR.183}\end{equation}
(given consistent trivializations of the torsion classes). At the level of
bundles \eqref{EKR.182} corresponds to the fact that if $n\alpha =0$ then
the $n$-fold tensor product of a twisted bundle is untwisted and division
by $n$ is an isomorphism over $\bbR.$

Applying \eqref{EKR.182}, at the level of bundles over the $Y(pi),$ to
\eqref{EKR.118} shows the injectivity of the delocalized Chern character
over $\bbR.$ Since the surjectivity of the usual equivariant Chern
character is known the delocalized Atiyah-Hirzebruch isomorphism follows.
\end{proof}

\section{Equivariant fibrations}\label{Fib}

Consider two compact manifolds $X_i,$ $i=0,1,$ with smooth $G$
actions with fixed isotropy type where $X_0$ is the total space of a
$G$-equivariant fibre bundle over $X_1$
\begin{equation}
\xymatrix{
G\ar[d]_{\Id}\ar@{-}[r]&\ar[d]^{\phi} X_0\ar[r]&Y_0\ar[d]^{\tphi}\\
G\ar@{-}[r]& X_1\ar[r]&Y_1
}\text{with }Y_0\text{ connected.} 
\label{EKR.90}\end{equation}
In the absence of the connectedness condition on $Y_0$ (which becomes
significant in the stability of subgroups) we proceed component by
component.

First choose an isotropy group $H_1$ for the lower action and pass to the
Borel reduction $Z_1=X_1^{H_1}\subset X_1$ with its $N_1=N(H_1)$
action. Next consider the preimage
\begin{equation}
Z_0=\phi^{-1}(X_1^{H_1})\subset X_0.
\label{EKR.93}\end{equation}
As the preimage of a smooth manifold under a fibration this is smooth and
meets each orbit of the $G$-action on $X_0.$ Furthermore the pointwise
stabilizer is exactly $N_1$ since if $x\in \phi^{-1}(X_1^{H_1})$ and $gx\in
\phi^{-1}(X_1^{H_1})$ then $\phi(x)\in X_1^{H_1}$ and $g\phi(x)=\phi(gx)\in
X_1^{H_1}$ so $g\in N_1$ and conversely. The isotropy groups at points of
$Z_0$ remain unchanged since they are necessarily subgroups of $H_1\subset
N_1.$ Thus $Z_0$ is a reduction of the action on $X_0.$

We have thereby reduced the equivariant fibration to the special case
where the lower action has unique isotropy group $H_1$
\begin{equation}
\xymatrix{ N_1\ar[d]_{\Id}\ar@{-}[r]&\ar[d]^{\phi}
  Z_0\ar[r]&Y_0\ar[d]^{\tphi}\\ N_1\ar@{-}[r]&
  Z_1\ar[r]&Y_1&\kern-10pt\text{ with normal isotropy group }H_1.}
\label{EKR.122}\end{equation}

Now, consider the components of $Z_0.$ Since $Y_0$ is connected each of
these connected submanifolds meets every orbit in $Z_0$ and hence projects
onto $Y_0.$ Choose a point in $Z_0$ at which the isotropy group is $H_0.$
The rigidity of subgroups, already utilized above, shows that within the
component containing this point all isotropy groups fall in the
$H_1$-conjugacy class of $H_0$ as a subgroup of $H_1.$ Indeed, near the
chosen point, the isotropy groups are contained in $H_1$ and are conjugate
to $H_0$ by an element of $N_1$ near the identity and hence by the product
$n_0h_1$ with $n_0$ in the normalizer of $H_0$ and $h_1\in H_1.$ Thus they
are $H_1$-conjugate to $H_0,$ in fact by an element in the component of the
identity in $H_1.$ This therefore remains true over the component of $Z_0.$
Let $Z_0'$ be the union of the images of this component under the action of
$H_1;$ all the isotropy groups on $Z_0'$ are $H_1$-conjugate to $H_0$ and
all such groups occur in each orbit. The stabilizer of $Z'_0$ is the
subgroup $N_1'\subset N_1$ (of finite index) fixing the $H_1$-isotropy class
of $H_0.$ This is a reduction of the group action on $Z_0$ and hence of
that on $X_0.$

The image $\phi(Z_0')=Z_1'$ is a reduction of the action on $Z_1$ to the
subgroup $N_1';$ each of the components of $Z'_0$ projects to a smooth
submanifold and for two components the images are either equal or
disjoint. Thus $Z_1'\subset Z_1$ is fixed by the condition that at every
point in the preimage in $Z_0$ the isotropy group is $H_1$-conjugate to
$H_0.$ Thus we arrive at the reduced equivariant fibration
\begin{equation}
\xymatrix{ N'_1\ar[d]_{\Id}\ar@{-}[r]&\ar[d]^{\phi} Z'_0\ar[r]&Y_0\ar[d]^{\tphi}
&\kern-25pt\text{ with isotropy groups }H_1\text{-conjugate to }H_0\\
N'_1\ar@{-}[r]&
  Z'_1\ar[r]&Y_1&\kern-60pt\text{ with normal isotropy group }H_1.}
\label{EKR.123}\end{equation}

From this we deduce

\begin{proposition}\label{EKR.91} For an equivariant fibration
\eqref{EKR.90} with $Y_0$ connected there are choices of isotropy groups
$H_0\subset H_1$ for the two actions and a (`\emph{double}') reduction,
$Z''_0,$ of $X_0$ with fixed isotropy group $H_0$ and group action of
$N_0=N_G(H_0)\cap N_G(H_1)$ and $Z_1'$ of $X_1$ with isotropy groups $H_1$
and group action by $N_1'=N_0H_1$ giving an equivariant fibration
\begin{equation}
\xymatrix{
N_0\ar@{^(->}[d]\ar@{-}[r]&\ar[d]^{\phi} Z''_0\ar[r]&Y_0\ar[d]^{\tphi}
&\kern-25pt\text{with normal isotropy group }H_0\\
N_1'\ar@{-}[r]& Z'_1\ar[r]&Y_1&\kern-25pt\text{ with normal isotropy group }H_1.}
\label{EKR.92}\end{equation}
\end{proposition}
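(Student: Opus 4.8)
The plan is to notice that the discussion preceding the statement has already carried out the substantive part of the argument: starting from the equivariant fibration \eqref{EKR.90} with $Y_0$ connected it produces the reduced picture \eqref{EKR.123}, in which the lower action has normal isotropy group $H_1$ and on the upper space every isotropy group is $H_1$-conjugate to a chosen subgroup $H_0\subseteq H_1$. So the remaining work is one more Borel reduction of the upper space, together with the group-theoretic bookkeeping. First I would record the identity $N_1'=N_0H_1$ with $N_0=N_G(H_0)\cap N_G(H_1)$, where $N_1'\subseteq N_1$ is, by its construction there, the stabilizer of the $H_1$-conjugacy class of $H_0$: if $g\in N_1$ satisfies $gH_0g^{-1}=hH_0h^{-1}$ for some $h\in H_1$ then $h^{-1}g\in N_G(H_0)\cap N_G(H_1)=N_0$, so $g\in N_0H_1$, and the reverse inclusion is immediate from $N_0\subseteq N_1$ and $H_1\subseteq N_1'$.

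Next I would apply Borel reduction to the upper space $Z_0'$ of \eqref{EKR.123}. Since all isotropy groups of the $N_1'$-action on $Z_0'$ are $H_1$-conjugate to $H_0$ and $H_1\subseteq N_1'$, this action has a single isotropy type, so the fixed-isotropy locus $Z_0''=\{x\in Z_0' : \text{isotropy group at }x\text{ is }H_0\}$ is a smooth submanifold meeting every $N_1'$-orbit, i.e.\ a reduction of $Z_0'$, by \Refsect{Red} and the local product decompositions attached to a fixed isotropy type. I would then check that the stabilizer of $Z_0''$ in $G$ is exactly $N_0$: the strong stabilization condition \eqref{11.7.2020.3} forces, for $g$ with $x,gx\in Z_0''$, both $gH_0g^{-1}=H_0$ and, since $Z_0''\subseteq\phi^{-1}(X_1^{H_1})$ by \eqref{EKR.93}, also $g\in N_G(H_1)$, hence $g\in N_0$; conversely $N_0\subseteq N_1'$ stabilizes $Z_0'$ and normalizes $H_0$, so it stabilizes $Z_0''$. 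The $N_0$-action on $Z_0''$ then has fixed isotropy $H_0$ by construction, so $Z_0''$ is a `double' reduction of $X_0$; and $Z_1'$ is already a reduction of $X_1$ with fixed isotropy $H_1$ and group $N_1'$ from the preceding step.

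Then I would verify that $\phi$ restricts to an equivariant fibration $Z_0''\to Z_1'$ covering $\tphi:Y_0\to Y_1$. It maps into $Z_1'$ since $Z_0''\subseteq Z_0'$, and it is onto because, given $z_1\in Z_1'$, any point $x$ of $\phi^{-1}(z_1)\cap Z_0'$ has isotropy $hH_0h^{-1}$ with $h\in H_1$, so $h^{-1}x\in\phi^{-1}(z_1)\cap Z_0''$ (here $h^{-1}\in H_1$ fixes $z_1$ and preserves $Z_0'$). The local triviality of $Z_0''\to Z_1'$ and the surjectivity of $Z_0''\to Y_0$ are inherited from the level below through the same product decompositions. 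This yields \eqref{EKR.92}, and composing the category equivalences of Proposition~\ref{11.7.2020.1} along the reduction chains $X_1\rightsquigarrow Z_1\rightsquigarrow Z_1'$ and $X_0\rightsquigarrow Z_0\rightsquigarrow Z_0'\rightsquigarrow Z_0''$ identifies $\Bun_G(X_1)$ with $\Bun_{N_1'}(Z_1')$ and $\Bun_G(X_0)$ with $\Bun_{N_0}(Z_0'')$, as needed in the sequel.

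The main obstacle is not a computation but the isotropy bookkeeping that justifies \eqref{EKR.123}: that on one component of $Z_0$ every isotropy group is $H_1$-conjugate — not merely $G$- or $N_1$-conjugate — to a single $H_0$. This is what lets the final Borel reduction produce a space with literally fixed isotropy $H_0$ and a canonical ambient group $N_0$, and it rests on the rigidity of closed subgroups under the compact-group action together with the connectedness of $Y_0$, which forces each component of $Z_0$ to meet every orbit. Granting that point — already argued before \eqref{EKR.122}–\eqref{EKR.123} — the remaining reduction and the fibration check are routine and parallel the proof of Proposition~\ref{11.7.2020.1}.
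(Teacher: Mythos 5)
Your proposal is correct and follows essentially the same route as the paper: take the reduced picture \eqref{EKR.123} as established by the preceding discussion, perform one further Borel reduction of the upper space to the fixed-isotropy locus $Z_0''$ with group $N_0$, derive $N_1'=N_0H_1$ from the fact that $N_1'$-conjugates of $H_0$ are $H_1$-conjugates (your computation $h^{-1}g\in N_G(H_0)\cap N_G(H_1)$ is exactly the paper's \eqref{EKR.124}), and use the $H_1$-action on the fibres to see that each fibre of $\phi$ over $Z_1'$ meets $Z_0''$. The extra details you supply (the exact stabilizer of $Z_0''$, surjectivity onto $Y_0$, and the composite of category equivalences) are elaborations the paper leaves implicit, not a different argument.
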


\begin{proof} Having arrived at \eqref{EKR.123} we make the Borel reduction
  of the upper action corresponding to the isotropy group $H_0$ and with
  action by $N_0,$ the normalizer of $H_0$ in $N_1'.$ The fibres of $\phi$
  above points of $Z_1'$ have actions by $H_1$ and isotropy groups, for
  the $N_1'$ action on $Z_1',$ $H_1$ conjugate to $H_0.$ Thus, each fibre
  meets $Z_0''$ which fibres over $Z_1'.$ The fact that the
  $N_1'$-conjugates of $H_0$ are also $H_1$-conjugates means that for each
  $n_1\in N_1'$ there exists $h_1\in H_1$ such that 
\begin{equation}
n_1H_0n_1^{-1}=h_1H_0h_1^{-1}\Longrightarrow
h_1^{-1}n_1H_0(h_1^{-1}n_1)^{-1}=H_0\Longrightarrow N_1'=H_1N_0.
\label{EKR.124}\end{equation}
Since $H_1$ is normal in $N_1',$ $N_1'=N_0H_1$ and $H_1$ acts trivially on $Z_1'.$
\end{proof}

Note that these reductions occur precisely because the principal bundles
corresponding to the $G$ actions are partially trivialized by the existence
of the equivariant fibration.  Since $Z''_0$ is a reduction of the $G$
action on $X_0$ and $Z'_1$ is a reduction for $X_1$ there are, by
Proposition~\ref{11.7.2020.1}, equivalences of categories and hence a
pull-back functor $\phi^\#$
\begin{equation}
\xymatrix{
\Bun_G(X_0)\ar[r]& \Bun_{N_0}(Z''_0)\ar[r]
&\bigoplus_{\pi\in H'_0/N_0}\Bun_{\hQ_0(\pi)}(Y_0(\pi))\\
\Bun_G(X_1)\ar[r]\ar[u]_{\phi^*}& \Bun_{N'_1}(Z'_1)\ar[u]_{\phi^\#}\ar[r]
&\ar[u]_{\phi^\#}\bigoplus_{\sigma\in H'_1/N_1'}\Bun_{\hQ_1'(\sigma)}(Y_1(\sigma))
}
\label{EKR.95}\end{equation}
which we proceed to describe more explicitly.

The pull-back operation on the right in \eqref{EKR.95} is the composite of
pull-back for $N_1'$-equivariant bundles and restriction to the Borel
reduction to $N_0$-equivariant bundles. It follows that it can also written
as the composite of restriction, from $N_1'$-equivariant to
$N_0$-equivariant bundles on $Z_1'$ followed by pull-back of
$N_0$-equivariant bundles. The functoriality of reduction to twisted
bundles shows that the second step is just pull-back of twisted bundles so
we concentrate on the first step which can be understood in terms of
`branching maps'.

For a compact Lie group and closed subgroup $H_0\subset H_1$ consider the
bundle over the product of the sets of unitary irreducibles 
\begin{equation}
\xymatrix{
\tau\ar[r]& H_1'\times H_0'
}
\label{EKR.126}\end{equation}
with fibre at $(\sigma ,\pi)$ the space $\hom_{H_0}(\pi ,\sigma)$ of
$H_0$-equivariant linear maps from the representation space $M_0(\pi)$ of $\pi$
to the representation space $M_1(\sigma)$ of $\sigma.$ The support of this
bundle is proper as a relation. 

For simplicity we will denote $N'_1$ in the discussion by $N_1.$
Thus suppose as above that $H _1\subset N_1$ is a normal subgroup of a compact Lie
group, $H_0\subset H_1,$ and $N_0\subset N_1$ are closed subgroups with
$H_0$ normal in $N_0$ and $N_1=H_1\cdot N_0.$ As a subgroup
$N_0\subset N_1$ acquires a central extension
$\vN_0(\sigma)\subset\vN_1(\sigma)$ from the central extension of $N_1$
associated to the orbit of $\sigma \in H_1'.$ The tautological bundle
$M_1'$ over $H_1'$ therefore has an action of $\vN_0(\sigma)$ when
restricted to $N_0\sigma.$ In consequence the bundle $\tau$ has an action of
$\hN_0(\pi)\times \vN_1(\sigma)$ over $N_0\sigma \times N_1\pi.$ 

\begin{proposition}\label{EKR.125} If $X_1$ is a compact manifold with
  smooth action by $N_1$ with fixed isotropy group $H_1$ and $H_0,$ $N_0$
  are as above then for each pair $(\sigma,\pi)\in H_1'\times H_0$ there is
  a `branching map'
\begin{equation}
\tau:\Bun(Y(\sigma);\alpha (\sigma))\longrightarrow \Bun(Y(\pi);\alpha (\pi))
\label{EKR.130}\end{equation}
from $\hN_1(\sigma)$-equivariant bundles over $(X_1\times
N_1\sigma )/N_1$ to $\hN_0$-equivariant bundles over $(X_1\times
N_0\pi)/N_0$ given by tensor product with $\tau$ pulled back to $X_1.$
\end{proposition}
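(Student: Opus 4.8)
The plan is to construct the branching map concretely at the level of the tautological/homomorphism bundles and then verify that the resulting tensor operation descends correctly through all the equivalences of categories set up in Propositions~\ref{EKR.116} and~\ref{E-K-n.276}. First I would fix $(\sigma,\pi)\in H_1'\times H_0'$ and form, over $X_1\times N_0\sigma\times N_1\pi$ (or rather its appropriate reductions), the bundle $\tau$ whose fibre at $(\sigma',\pi')$ is $\hom_{H_0}(M_0(\pi'),M_1(\sigma'))$, pulled back from $H_1'\times H_0'$ as in~\eqref{EKR.126}. The key structural point, already noted in the paragraph preceding the statement, is that $\tau$ carries an action of $\hN_0(\pi)\times\vN_1(\sigma)$ over $N_0\pi\times N_1\sigma$: the $\vN_1(\sigma)$-factor acts on $M_1'$, the $\vN_0(\pi)\subset\vN_1(\pi)$ inherited extension acts on $M_0'$ (and hence $\hN_0(\pi)$ on its dual), and the $H_0$-equivariance in the definition of the fibre is exactly what makes these two actions commute with the Schur ambiguity in a compatible way. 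The opposite extensions are chosen precisely so that the central $\UU(1)$-phases will cancel at the right moment.

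Next I would define the map on objects: given $W(\sigma)\in\Bun(Y(\sigma);\alpha(\sigma))$, regard it via Proposition~\ref{E-K-n.276} as an $\hN_1(\sigma)$-equivariant bundle on $Z_1(\sigma)=(X_1\times N_1\sigma)/\!/$ (the principal-bundle total space), pull it back to $X_1\times N_0\pi\times N_1\sigma$, tensor with $\tau$, and observe that over this space the $\vN_1(\sigma)$-action on the first factor and the $\hN_1(\sigma)$-action on $W(\sigma)$ combine so that their central extensions cancel, leaving an honest $N_1$-action on the $\sigma$-variables; summing (pushing forward) over the finite orbit $N_1\sigma$ removes that variable entirely. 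What survives is an $\hN_0(\pi)$-equivariant bundle over $X_1\times N_0\pi$, i.e.\ a point of $\Bun_{\hQ_0(\pi)}(Z_1(\pi))$, hence by Proposition~\ref{E-K-n.276} again an element of $\Bun(Y(\pi);\alpha(\pi))$. Functoriality with respect to morphisms is immediate: an $\hN_1(\sigma)$-equivariant bundle map induces, after tensoring with $\id_\tau$ and pushing forward, an $\hN_0(\pi)$-equivariant bundle map, and composition is respected because tensoring and integration over a fibre are functorial. Thus~\eqref{EKR.130} is a well-defined functor, and by construction it is the tensor product with $\tau$ pulled back to $X_1$, as claimed.

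Finally I would check the two assertions packaged in the statement — that the target of $\tau$ is $\hN_0$-equivariant bundles over $(X_1\times N_0\pi)/N_0$ and that the source is $\hN_1(\sigma)$-equivariant bundles over $(X_1\times N_1\sigma)/N_1$ — which are just the identifications from Proposition~\ref{E-K-n.276} applied to $H_1\subset N_1$ and $H_0\subset N_0$ respectively; here the hypothesis $N_1=H_1\cdot N_0$ with $H_0$ normal in $N_0$ is what guarantees that the $N_0$-reduction of $X_1$ carries fixed isotropy group $H_0\cap N_0 = H_0$ (since $H_0\subset H_1$ and $H_0$ is normal in $N_0$), so that the second application of Proposition~\ref{E-K-n.276} is legitimate. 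The main obstacle I anticipate is bookkeeping the central extensions: one must verify that the extension $\vN_0(\pi)$ inherited by $N_0$ from the $\sigma$-orbit extension of $N_1$ restricted to $N_0$, versus the extension $\vN_0(\pi)$ that $N_0$ acquires intrinsically from the $\pi$-orbit, are compatibly related — more precisely that the net twisting on the output bundle is exactly $\alpha(\pi)$ and not $\alpha(\pi)$ shifted by the restriction of $\alpha(\sigma)$. This is where the choice of \emph{opposite} extensions ($\hN$ versus $\vN$) and the $H_0$-equivariance built into the fibre of $\tau$ do the work: the $\sigma$-twist cancels against itself in the tensor product before the pushforward, leaving only the $\pi$-twist, but making that cancellation precise (at the cocycle level, or via the lifting-bundle-gerbe picture of~\eqref{E-K-n.274}) is the delicate step.
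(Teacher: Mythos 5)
Your proposal is correct and follows essentially the same route as the paper, which gives no separate proof: the proposition is packaged from the preceding construction of the intertwiner bundle $\tau\to H_1'\times H_0'$ with its $\hN_0(\pi)\times\vN_1(\sigma)$-action, the cancellation of the opposite central extensions in the $\sigma$-variable upon tensoring with an $\hN_1(\sigma)$-equivariant bundle, and pushforward over the finite orbit, exactly as you describe. The ``delicate step'' you flag (that the residual twist is exactly $\alpha(\pi)$) is precisely what the paper records later in the relation $\vQ(\pi,\sigma)\otimes\vQ(\sigma)=\vQ(\pi)$ of \eqref{EKR.181}.
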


\begin{corollary}\label{EKR.131} 
The pull-back map on the right in \eqref{EKR.95} is the composite  
\begin{equation}
\phi^\#=\phi^*\circ\tau
\label{EKR.132}\end{equation}
where $\phi^*$ is the pull-back of bundle gerbe modules covering the
pull-back of principal bundles.
\end{corollary}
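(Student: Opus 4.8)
The plan is to unwind $\phi^\#$ through the two equivalences of categories that build the columns of \eqref{EKR.95} and then to appeal to Proposition~\ref{EKR.125}. As explained just before the statement, under the equivalences $\Bun_G(X_0)\to\Bun_{N_0}(Z''_0)$ and $\Bun_G(X_1)\to\Bun_{N_1'}(Z'_1)$ of Proposition~\ref{11.7.2020.1} associated with the double reduction \eqref{EKR.92}, the functor $\phi^\#$ on the middle column of \eqref{EKR.95} is the composite of (a) the restriction functor from $N_1'$-equivariant to $N_0$-equivariant bundles on $Z'_1$ (using $N_0\subset N_1'$), followed by (b) pull-back of $N_0$-equivariant bundles along the $N_0$-equivariant fibration $Z''_0\to Z'_1$ coming from \eqref{EKR.92}. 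So it suffices to show that, transported through the twisted-bundle equivalences of Proposition~\ref{EKR.116}, step (b) becomes pull-back of gerbe modules covering the pull-back of principal bundles, and step (a) becomes the direct sum over $(\sigma,\pi)$ of the branching maps $\tau$ of \eqref{EKR.130}.

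First I would treat step (b). Since the circle bundle \eqref{EKR.76}, the opposite extension $\hN_0(\pi)$, the finite cover, and the lifting bundle gerbe \eqref{E-K-n.274} are all built functorially from the group $N_0$, the representation $\pi\in H_0'$, and the $N_0$-space, the $N_0$-equivariant fibration $Z''_0\to Z'_1$ induces, for each orbit $\pi\in H_0'/N_0$, a map of principal $Q_0=N_0/H_0$-bundles covering a map of the corresponding finite covers, under which the Mackey obstruction, and hence the Dixmier--Douady class $\alpha_0(\pi)$, is pulled back. By the naturality built into Proposition~\ref{EKR.116} (equivalently Proposition~\ref{E-K-n.276}), pulling back an $N_0$-equivariant bundle then corresponds, summand by summand, to pulling back each twisted piece as a gerbe module along this map; this is exactly the $\phi^*$ of \eqref{EKR.132}, with target the finite cover $Y_0(\pi)$ appearing in \eqref{EKR.95}.

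Next I would identify step (a) with $\tau$, which is the content of Proposition~\ref{EKR.125}. An $N_1'$-equivariant bundle $V$ over $Z'_1$ decomposes, by Proposition~\ref{EKR.116}, into $\hQ_1'(\sigma)$-equivariant pieces $W_1(\sigma)$ over $Z_1'(\sigma)$ with $V=(\pi_Z)_*(W_1\otimes M_1')$; restricting the isotropy action to $H_0\subset H_1$ and using the branching decomposition $M_1(\sigma)|_{H_0}=\bigoplus_\pi\hom_{H_0}(\pi,\sigma)\otimes M_0(\pi)$ exhibits the $\pi$-multiplicity bundle of $V|_{N_0}$ as $\bigoplus_\sigma W_1(\sigma)\otimes\hom_{H_0}(\pi,\sigma)$, i.e.\ as $\tau$ applied to $V$ in the sense of \eqref{EKR.130}. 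The delicate point, which I expect to be the main obstacle, is the bookkeeping of the central extensions and finite covers through this step: on restriction from $N_1'$ to $N_0$ the $\hN_1(\sigma)$-action on $W_1(\sigma)$ becomes the induced extension action of $N_0$, and this is cancelled by the $\vN_1(\sigma)$-factor of the action on the bundle \eqref{EKR.126}, leaving a genuine $\hN_0(\pi)$-action on $\bigoplus_\sigma W_1(\sigma)\otimes\hom_{H_0}(\pi,\sigma)$; one must check that the residual twist is precisely $\alpha_0(\pi)$ and that the resulting bundle lives over the correct finite cover, which is exactly what Proposition~\ref{EKR.125} secures. Composing step (a) and then step (b) yields $\phi^\#=\phi^*\circ\tau$, with $\phi^*$ the pull-back of gerbe modules covering the pull-back of principal bundles.
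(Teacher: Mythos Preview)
Your proposal is correct and follows essentially the same approach as the paper. The paper does not give a separate proof of the corollary: the argument is contained in the paragraph preceding Proposition~\ref{EKR.125}, where $\phi^\#$ is factored as restriction from $N_1'$- to $N_0$-equivariant bundles on $Z_1'$ followed by pull-back of $N_0$-equivariant bundles, with the observation that functoriality of the reduction to twisted bundles makes the latter step ordinary pull-back of gerbe modules; your steps (a) and (b) reproduce exactly this decomposition, and your additional detail on the cancellation of central extensions simply spells out what Proposition~\ref{EKR.125} encapsulates.
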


For the Cartan model for equivariant cohomology, as noted in \eqref{EKR.156},
reduction corresponds to an homotopy equivalence to the complex of equivariant
forms on the reduction. Thus under the multiple reductions on $X_0,$  
\begin{equation}
(\CI(X_0;\Lambda ^*\otimes S(\mathfrak{g}^*))^G\longrightarrow
  (\CI(Z''_0;\Lambda ^*\otimes S(\mathfrak{n}_0^*))^{N_0}
\label{EKR.173}\end{equation}
where $\mathfrak{n}_0$ is the Lie algebra of $N_0$ which normalizes the
isotropy group $H_0.$ The quotient group $Q=N_0/H_0$ acts freely on $Z''_0$
and \eqref{EKR.159} gives the further homotopy equivalence 
\begin{equation}
(\CI(Z''_0;\Lambda ^*\otimes S(\mathfrak{n}_0^*))^{N_0}\longrightarrow
  \CI(Y_0;\Lambda ^*\otimes S_{H_0})
\label{EKR.174}\end{equation}
to the deRham complex with coefficients in the flat bundle $S_{H_0}$ over $Y_0.$

Similarly in the base the successive reductions and `twisted reduction' give
homotopy equivalences 
\begin{equation}
(\CI(X_1;\Lambda ^*\otimes S(\mathfrak{g}^*))^{G}\longrightarrow
  (\CI(Z'_1;\Lambda ^*\otimes S(\mathfrak{n}_1^*))^{N'_1}\longrightarrow
  \CI(Y_1;\Lambda ^*\otimes S_{H_1})
\label{EKR.175}\end{equation}
to the deRham complex with coefficients in $S_{H_1}$ which has typical fibre
$S(\mathfrak{n}^*_1).$  Since $Q=N'_1/H_1=N_0/H_0,$ the actions on
$S(\mathfrak{n}_0^*)$ and $\mathfrak{n}_1^*$ are consistent with
restriction, induced by the inclusion of $N_0\hookrightarrow N_1'$ and
hence there is a natural pull-back and restriction map covering $\phi:$
\begin{equation}
(S_{H_1})_{\phi(y_0)}\longrightarrow (S_{H_0})_{y_0},\ y_0\in Y_0
\label{EKR.176}\end{equation}
and hence an augmented, smooth, pull-back map on `deRham' sections 
\begin{equation}
\phi^{\#}:\CI(Y_1;\Lambda ^*\otimes S_{H_1})\longrightarrow \CI(Y_0;\Lambda
^*\otimes S_{H_0}).
\label{EKR.177}\end{equation}
\begin{proposition}\label{EKR.178} Pull-back and iterated reduction gives a
  commutative diagram 
\begin{equation}
\xymatrix{
(\CI(X_0;\Lambda ^*\otimes S(\mathfrak{g}^*))^{G}\ar[r]& \CI(Y_0;\Lambda ^*\otimes S_{H_0})\\
(\CI(X_1;\Lambda ^*\otimes S(\mathfrak{g}^*))^{G}\ar[u]^{\phi^*}\ar[r]& \CI(Y_1;\Lambda ^*\otimes S_{H_1})\ar[u]_{\phi^\#}
}
\label{EKR.179}\end{equation}
where the horizontal maps are homotopy equivalences.
\end{proposition}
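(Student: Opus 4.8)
Proof proposal.

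The plan is to verify commutativity of \eqref{EKR.179} by tracking a $G$-invariant Cartan form through both composite routes and checking they agree, using the explicit nature of all the maps involved. First I would recall that the top and bottom horizontal arrows are themselves composites: the first leg is the restriction map of \eqref{EKR.156} under the iterated (double) reduction of Proposition~\ref{EKR.91}, and the second leg is the $B^\infty$ homotopy equivalence of Proposition~\ref{EKR.158}, built from a choice of invariant inner product on $N_0$ (resp.\ $N_1'$) together with a connection on $Z_0''$ (resp.\ $Z_1'$) as a principal $Q$-bundle. The key structural fact, already extracted in the discussion preceding the proposition, is that $Q = N_1'/H_1 = N_0/H_0$, so that a single connection and inner-product choice upstairs is compatible under $\phi$ with the corresponding choices downstairs; this is what makes $\phi^\#$ in \eqref{EKR.177} well-defined, and it is the same compatibility that will force the square to commute.

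The main steps, in order, are as follows. (1) Reduce to the situation of \eqref{EKR.123}: by Proposition~\ref{EKR.91} we have the double reduction $Z_0'' \to Z_1'$ with normal isotropy groups $H_0 \subset H_1$ and groups $N_0 \hookrightarrow N_1'$, and by \eqref{EKR.156} the restriction maps $(\CI(X_i;\Lambda^*\otimes S(\mathfrak g^*)))^G \to (\CI(Z;\Lambda^*\otimes S(\mathfrak n^*)))^N$ are homotopy equivalences; moreover the outer square built from $\phi^*$ on $X$'s and the restriction of $\phi^*$ to the reductions commutes trivially, since restricting $Z_1'$-forms to $\phi$-pulled-back forms on $Z_0''$ is literally the restriction of the pull-back on the $X$'s. (2) Choose an invariant inner product on $N_1'$ restricting to one on $N_0$ (possible since $N_0$ is closed), and a principal $Q$-connection $\alpha$ on $Z_1'$; pull it back along $\phi$ to get a $Q$-connection $\phi^*\alpha$ on $Z_0''$, using that $\phi$ is $Q$-equivariant and $Q = N_0/H_0 = N_1'/H_1$. (3) With these compatible choices, run the $B^\infty$-reduction of Proposition~\ref{EKR.158} on both $Z_0''$ and $Z_1'$: the operator $A$ of \eqref{EKR.154}, contraction with the (partial) connection form followed by the trace, commutes with $\phi^*$ because $\phi^*\alpha$ is the pulled-back form and the trace on $\mathfrak n_1^* \to \mathfrak n_0^*$ is intertwined by the inclusion $\mathfrak n_0 \hookrightarrow \mathfrak n_1'$ (this is exactly the map \eqref{EKR.176} on coefficients, and the splitting $\mathfrak q \subset \mathfrak n$ orthogonal to $\mathfrak h$ is respected by the chosen inner products). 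Hence the successive polynomial-order-lowering steps intertwine $\phi^*$ on $Z$'s with $\phi^\#$ on $Y$'s, and in the limit $B^\infty \circ \phi^* = \phi^\# \circ B^\infty$. (4) Paste the two commuting squares — the reduction square from step (1) and the $B^\infty$-square from step (3) — to obtain \eqref{EKR.179}; the composite horizontal maps are homotopy equivalences as composites of such.

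The part needing the most care is step (3), and within it the claim that the contraction-and-trace operator $A$ genuinely commutes with $\phi^*$. The subtlety is that on $Z_0''$ one uses a \emph{partial} connection form valued in $\mathfrak n_0$ (projecting onto the orthocomplement of $\mathfrak h_0$ in the tangent space to the orbits), whereas on $Z_1'$ one uses the honest principal $Q$-connection valued in $\mathfrak q \cong \mathfrak n_1'/\mathfrak h_1$; one must check that pulling the latter back along $\phi$ and then enlarging it to a partial connection on $Z_0''$ agrees with directly building the partial connection on $Z_0''$ from the same data — i.e.\ that the orthogonal projections to $\mathfrak n_0 \supset \mathfrak h_0$ and to $\mathfrak n_1' \supset \mathfrak h_1$ are compatible under $d\phi$ and the inclusion $\mathfrak n_0 \hookrightarrow \mathfrak n_1'$. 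This compatibility is precisely where the hypothesis $N_1' = H_1 N_0$ from Proposition~\ref{EKR.91} is used: it guarantees that $\mathfrak n_1' = \mathfrak h_1 + \mathfrak n_0$, so the $Q$-directions upstairs and downstairs are identified and the connection pulls back correctly. Once that is nailed down, the iteration of \eqref{EKR.155} and the stabilization argument of Proposition~\ref{EKR.158} go through verbatim in a $\phi^*$-equivariant manner, and commutativity of \eqref{EKR.179} follows.
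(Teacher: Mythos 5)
Your proposal is correct and follows essentially the same route as the paper, which in fact offers no separate proof of Proposition~\ref{EKR.178} at all: the statement is presented as an immediate consequence of the constructions \eqref{EKR.173}--\eqref{EKR.177} immediately preceding it, and your elaboration (commutativity of the restriction square, then compatible choices of invariant inner product and principal $Q$-connection making the $B^\infty$ reductions intertwine $\phi^*$ with $\phi^\#$) is exactly the intended argument. The one point worth noting is that if the orthocomplement compatibility in your step (3) is awkward for a given invariant metric, it suffices to observe that different choices of connection and complement yield chain-homotopic $B^\infty$ maps, so the square commutes up to homotopy, which is all that is needed since the horizontal arrows are only homotopy equivalences.
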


Pull-back for the forms corresponding to delocalized equivariant cohomology
behaves similarly. The quotient bundle $Q$ acts on both $H'_1$ and $H'_0.$
As in the case of a general equivariant bundle the tautological bundle
$M_1'$ of $H_1$-representations over $Z_1'\times H_1'$ decomposes into a
bundle over the product with $H_0'$
\begin{equation}
M'_{10}\longrightarrow Z_1'\times H_1'\times H_0'
\label{EKR.180}\end{equation}
with proper support, such that $M'_{10}\otimes M'_0$ pushes forward off
$H_0'$ to $M_1'$ over $Z_1'\times H_1'.$ Restricting to a $Q$-orbit
$o_1(\sigma )\subset H_1'$ the support of $M'_{10}$ is restricted to the
finite number of orbits $o_0(\pi)\subset H_0'$ of $H_0$-subrepresentations
of $\sigma.$ The bundle $M'_1$ over $Z_1'\times o_1(\sigma )$ has an action
of the corresponding central extension $\vQ(\sigma)$ and following the
argument above $M'_{10}$ has an action by the central extension $\vQ(\pi
,\sigma)$ which has the property 
\begin{equation}
\vQ(\pi ,\sigma )\otimes \vQ(\sigma )=\vQ(\pi )
\label{EKR.181}\end{equation}
with the tensor product referring to the circle bundles giving the central
extensions. 

\section{Reduced models}\label{Mod}

Each of the three equivariant cohomology theories, K-theory, delocalized
(by definition) and Cartan, has a model over the iterated space resolving
the quotient by the group action and at this level the behavior of the
Chern character can be seen rather directly.

Not surprisingly, the most complicated of these is equivariant
K-theory. Each of the strata, $X_\alpha ,$ of the normal resolution, has
Borel reduction to a principal bundle with total space $Z_\alpha $ and base
$Y_\alpha$ for the action of $Q_\alpha =N(H_\alpha)/H_\alpha$ where
$H_\alpha$ is a choice of isotropy group. We need to choose the isotropy
groups consistently to define the pull-back maps so we proceed
categorically, allowing all possible choices of isotropy group. Once a
choice is made the spaces
\begin{equation}
Y_\alpha (\pi)=Z_\alpha(\pi)/N(H_\alpha ),\ \pi\in
H_\alpha ',\  X_\alpha(\pi)=X_\alpha \times o(\pi),\ o(\pi)=N(H_\alpha)\pi
\label{EKR.133}\end{equation}
are defined for each orbit of $N(H_\alpha)$ acting on the category of
irreducible unitary representations $H'_\alpha.$ Then the reduced version
of an equivariant bundle consists of a finite collection of twisted bundles
for the Dixmier-Douady classes, $a_\alpha(\pi),$ of $Z_\alpha (\pi)$ over the
corresponding $Y_\alpha(\pi ).$ As discussed above, these are bundles
gerbes over the $X_\alpha (\pi)$ as the total space of a lifting bundle
gerbe given by the free action of $Q_\alpha (\pi)$ and the central
extension $\hQ_\alpha (\pi)$ arising from the action of $N(H_\alpha)$ on
$o(\pi).$ Thus for each stratum we have a category of reduced equivariant bundles
\begin{equation}
\Bun_\alpha=\bigoplus_{H'_\alpha /N(H_\alpha )} \Bun(Y_\alpha (\pi),a_\alpha (\pi)).
\label{EKR.134}\end{equation}
This behaves naturally under change of choices.

For each $\beta <\alpha$ there is a boundary hypersurface
$H_{\beta,\alpha}$ of $X_\beta$ with fibration $\phi_{\beta ,\alpha
}:H_{\beta ,\alpha }\longrightarrow X_\alpha.$ Proposition ~\ref{EKR.130}
describes the corresponding augmented pull-back map
\begin{equation}
\phi_{\beta ,\alpha }^{\#}:\Bun_\alpha\longrightarrow
\Bun_\beta \big|_{H_{\beta ,\alpha }}.
\label{EKR.135}\end{equation}

Then the `chain space' for K-theory consists of $W_\alpha \in\Bun_\alpha$
for each $\alpha$ such that for all $\alpha <\beta$ 
\begin{equation}
W_\alpha \big|_{H_{\beta ,\alpha }}\simeq \phi^\#_{\beta ,\alpha }W_\beta 
\label{EKR.136}\end{equation}
where the implied isomorphisms must compose under iteration. The collection
of reduced bundles is then a category and the reduced K-theory $K_{\red}(Y_*)$ is the
corresponding Grothendieck group; of course, despite the notation, this
depends on more information than just the quotient spaces.

\begin{proposition}\label{EKR.137} The equivariant K-theory of $X$ is
  naturally isomorphic to the reduced K-theory $K_{\red}(Y_*)$ of the
  resolution of the quotient.
\end{proposition}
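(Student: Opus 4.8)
The plan is to realise $K_{\red}(Y_*)$ as the Grothendieck group of a category of iterated objects that is equivalent, in a chain of elementary steps, to the category of iterated $G$-equivariant bundles over the resolution $X$; since the latter category computes the equivariant K-theory of $X$ — which by \cite{EqKtRe} is $K_G(M)$ — the proposition then follows on passing to Grothendieck groups. Recall that an iterated $G$-bundle over $X$ is a family $V_\alpha\in\Bun_G(X_\alpha)$, one on each stratum, together with isomorphisms $V_\beta\big|_{H_{\beta,\alpha}}\longrightarrow\phi_{\beta,\alpha}^*V_\alpha$ for $\beta<\alpha$ that compose under iteration at the common corners, while a chain object for $K_{\red}(Y_*)$, as in \eqref{EKR.136}, is a family $W_\alpha\in\Bun_\alpha$ with isomorphisms $W_\beta\big|_{H_{\beta,\alpha}}\simeq\phi^\#_{\beta,\alpha}W_\alpha$ subject to the same iteration condition. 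It suffices to match the two descriptions functorially.

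First I would apply, stratum by stratum, the equivalence of categories of Proposition~\ref{E-K-n.276},
\[
\Bun_G(X_\alpha)\longrightarrow \Bun_\alpha=\bigoplus_{\pi\in H'_\alpha/N(H_\alpha)}\Bun(Y_\alpha(\pi);a_\alpha(\pi)),
\]
which applies because each $X_\alpha$ carries an action with a single isotropy type, and which identifies each $\Bun_G(X_\alpha)$ with the category $\Bun_\alpha$ of \eqref{EKR.134}. Next, for each ordered pair $\beta<\alpha$ I would invoke Corollary~\ref{EKR.131}: under these equivalences the ordinary pull-back $\phi_{\beta,\alpha}^*$ of equivariant bundles corresponds to the augmented pull-back $\phi^\#_{\beta,\alpha}=\phi^*\circ\tau$, with $\tau$ the branching map of Proposition~\ref{EKR.125} built from the double reduction of Proposition~\ref{EKR.91}. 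Hence the functor sending $(V_\alpha)$ to the family $(W_\alpha)$ of its fibrewise reductions carries each compatibility isomorphism $V_\beta|_{H_{\beta,\alpha}}\cong\phi_{\beta,\alpha}^*V_\alpha$ to a compatibility isomorphism $W_\beta|_{H_{\beta,\alpha}}\simeq\phi^\#_{\beta,\alpha}W_\alpha$, and so defines a functor from iterated $G$-bundles over $X$ to the chain category underlying $K_{\red}(Y_*)$. It is an equivalence: it is one on each stratum, and since $\phi^\#$ is, under the equivalences, the image of $\phi^*$, full faithfulness and essential surjectivity of the connecting data reduce to the corresponding statements on strata. Passing to Grothendieck groups then gives $K_G(X)=K_G(M)\cong K_{\red}(Y_*)$, with naturality under change of the chosen isotropy groups inherited from that of Proposition~\ref{E-K-n.276} and of the branching construction.

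The main obstacle is coherence at the deeper corners of $X$. At a boundary face lying on several hypersurfaces $H_{\beta,\alpha}$ the fibrations $\phi_{\beta,\alpha}$ form a chain under fibre inclusion for the isotropy order, so one must verify the cocycle identity $\phi^\#_{\gamma,\alpha}=\phi^\#_{\gamma,\beta}\circ\phi^\#_{\beta,\alpha}$ for $\alpha<\beta<\gamma$ over that face, both for the twisted bundles and for their twistings. For the underlying principal bundles and gerbe pull-backs $\phi^*$ this is functoriality; for the branching factors it is the transitivity of restriction of representations along $H_\alpha\subset H_\beta\subset H_\gamma$ together with compatibility of the induced central extensions — precisely \eqref{EKR.181}, $\vQ(\pi,\sigma)\otimes\vQ(\sigma)=\vQ(\pi)$, and the corresponding composition $M'_{\gamma\beta}\otimes M'_{\beta\alpha}\to M'_{\gamma\alpha}$ of branching bundles. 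Granting this, the compatibility isomorphisms chosen on the $W_\alpha$ compose under iteration exactly because those on the $V_\alpha$ do, so no further obstruction appears; a simpler parallel bookkeeping absorbs the dependence on the choice of base points in the orbits $N(H_\alpha)\pi$, which changes the data only by gerbe-module isomorphisms, and the Grothendieck-group statement is then immediate from the equivalence of categories.
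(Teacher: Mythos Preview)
Your proposal is correct and follows the same route as the paper: the paper's own proof is the single sentence ``This follows by application of the reduction results above,'' and your argument is precisely the detailed unpacking of that sentence --- the stratum-wise equivalence of Proposition~\ref{E-K-n.276}, the identification of $\phi^*$ with $\phi^\#$ via Corollary~\ref{EKR.131} and the double reduction of Proposition~\ref{EKR.91}, and the passage to Grothendieck groups. Your discussion of coherence at the deeper corners (transitivity of branching along $H_\alpha\subset H_\beta\subset H_\gamma$ and the compatibility \eqref{EKR.181} of the central extensions) makes explicit a point the paper leaves implicit.
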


\begin{proof} This follows by application of the reduction results above.
\end{proof}

For equivariant cohomology there is a closely related `reduced' Cartan
model. In the same setting as above, let $C(H_\alpha)$ be the space of
$H_\alpha$-invariant polynomials on the dual of the  Lie algebra of $H_\alpha.$ The
normalizer $N(H_\sigma)$ acts on this space by conjugation, with the action
descending to $Q_\alpha$ and locally trivial. Lifting $C(H_\alpha)$ to a
trivial bundle over $X_\alpha$ this action defines a flat bundle $C_\alpha$
over the base $Y_\alpha$ of the $G$ action. Then the reduced Cartan model
consists of forms
\begin{equation}
u_\alpha \in\CI(Y_\alpha ;\Lambda ^*\otimes C_\alpha )
\label{EKR.138}\end{equation}
over each $Y_\alpha$ with values in $C_\alpha$ but connected by (reduced)
pull-back maps as in \eqref{EKR.179}, 
\begin{equation}
u_\alpha \big|_{H_{\beta ,\alpha }}=\phi_{\beta ,\alpha }^*u_\beta .
\label{EKR.139}\end{equation}

Again by repeated application if the reduction results above we arrive at
\begin{proposition}\label{EKR.140} The equivariant cohomology of $X$ is
naturally isomorphic to the reduced Cartan cohomology which is the deRham
cohomology of forms \eqref{EKR.138}, \eqref{EKR.139}.
\end{proposition}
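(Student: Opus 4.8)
The plan is to assemble the claimed isomorphism out of the building blocks already established, in the same way that Proposition~\ref{EKR.137} is obtained for K-theory. At the level of a single stratum the key ingredient is Proposition~\ref{EKR.158}, which (after Borel reduction, as in \eqref{EKR.156}) gives an explicit homotopy equivalence between the Cartan complex $(\CI(X_\alpha;\Lambda^*\otimes S(\mathfrak g^*)))^G$ and the deRham complex $\CI(Y_\alpha;\Lambda^*\otimes C_\alpha)$ with coefficients in the flat bundle $C_\alpha$. First I would record that these homotopy equivalences can be chosen compatibly with change of isotropy group, using the functoriality statement following \eqref{EKR.89} and the naturality of the contraction operator $A$ built from a partial connection form; this is what makes the categorical (choice-free) formulation legitimate.

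Next I would globalize over the resolution. The resolved space $X$ breaks into the tree of strata $X_\alpha$, and an equivariant form on $M$ lifts, under the blow-down $\beta:X\to M$, to a compatible family $\{u_\alpha\}$ of equivariant Cartan forms on the $X_\alpha$, with $u_\alpha|_{H_{\beta,\alpha}}=\phi_{\beta,\alpha}^* u_\beta$ for $\beta<\alpha$, exactly as equivariant bundles lift in \cite{EqKtRe}. One checks that this lifting is a quasi-isomorphism onto the subcomplex of compatible families: on the open stratum it is the identity, and at each boundary hypersurface the compatibility condition \eqref{EKR.139} is precisely the restriction of the collar product structure of the normal resolution, so no cohomology is created or destroyed. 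Then I would apply the stratum-wise homotopy equivalence $B^\infty$ of Proposition~\ref{EKR.158} simultaneously to every $X_\alpha$, checking via Proposition~\ref{EKR.178} that these equivalences intertwine the geometric pull-backs $\phi_{\beta,\alpha}^*$ on the Cartan side with the augmented pull-backs $\phi_{\beta,\alpha}^*$ on the reduced side (the map \eqref{EKR.177} built from \eqref{EKR.176}). This gives a homotopy equivalence between the complex of compatible Cartan families on $X_*$ and the complex of reduced forms \eqref{EKR.138}, \eqref{EKR.139}; composing with the lifting quasi-isomorphism yields the asserted identification of $H^*_G(X)=H^*_G(M)$ with the reduced Cartan cohomology.

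The main obstacle is the compatibility at the boundary hypersurfaces: one must verify that the stratum-wise homotopy equivalences $B^\infty_\alpha$ and $B^\infty_\beta$ can be chosen so that the square in Proposition~\ref{EKR.178} commutes \emph{on the nose} (not merely up to homotopy) when restricted to $H_{\beta,\alpha}$, for \emph{all} pairs $\beta<\alpha$ simultaneously and consistently under iteration along chains of boundary faces. This requires the invariant metrics and connections used in Proposition~\ref{EKR.158} to be chosen compatibly with the iterated fibration structure of the resolution — i.e. the connection on $Z_\alpha$ should restrict to the pulled-back connection data near each $H_{\beta,\alpha}$, which is possible because the fibrations $\phi_{\beta,\alpha}$ are equivariant and the double reduction of Proposition~\ref{EKR.91} identifies the relevant quotient groups $Q_\alpha=Q_\beta$. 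Once such compatible choices are fixed, the remaining verifications — that the horizontal maps are homotopy equivalences and that everything is smooth — are routine applications of the single-stratum results, and the six-term excision arguments used for K-theory are not even needed here since the coefficient complexes are fine resolutions and the argument is a direct spectral-sequence-free gluing.
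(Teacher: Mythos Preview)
Your proposal is correct and follows exactly the route the paper intends: the paper's own proof is the single sentence ``again by repeated application of the reduction results above,'' meaning precisely the stratum-wise homotopy equivalences of Proposition~\ref{EKR.158} together with the boundary compatibility of Proposition~\ref{EKR.178}, assembled over the resolution. Your write-up simply unpacks this, and your identification of the main technical point --- choosing the invariant metrics and connections compatibly with the iterated fibration structure so that the squares in Proposition~\ref{EKR.178} commute on the nose at every boundary face --- is exactly the content the paper leaves implicit in the phrase ``repeated application.''
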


Finally we \emph{define} delocalized equivariant cohomology by replacing
the Cartan space $C(H_\alpha)$ by the real representation ring of
$H_\alpha,$ $\cR(H_\alpha )=\cI(H_\alpha )\otimes \bbR,$ to which it is
isomorphic in the connected case. The action of $N(H_\alpha)$ by
conjugation on $H_\alpha$ induces an equivariant action on $\cI(H_\alpha)$
and hence on $\cR(H_\alpha)$ and this defines a flat bundle
$R_\alpha$ over $Y_\alpha.$ Then the admissible `delocalized' forms are
\begin{equation}
v_\alpha \in\CI(Y_\alpha ;\Lambda ^*\otimes \cR_\alpha )
\label{EKR.141}\end{equation}
connected by the pull-back maps augmented by branching 
\begin{equation}
v_\alpha \big|_{H_{\beta ,\alpha }}=\phi_{\beta ,\alpha }^*v_\beta.
\label{EKR.142}\end{equation}
The delocalized cohomology $H^*_{G,\dl}(X)$ is defined to be the deRham
cohomology of this complex of forms. In this sense there is no reduction
theorem but a definition. It is very natural to expect the possibility a
sheaf-theoretic formulation directly over the original space.

The surjective character maps $\cR(H)\longrightarrow C(H)$ for the isotropy groups
induce surjective `localization' maps from the forms \eqref{EKR.141},
\eqref{EKR.142} to the Cartan forms \eqref{EKR.138}, \eqref{EKR.139}
through relaxation of the local coefficients from $\cR_\alpha$
to $C_\alpha.$ This therefore descends to a surjective map
\begin{equation}
\loc:H^*_{G,\dl}(X)\longrightarrow H^*_{G}(X).
\label{EKR.145}\end{equation}

To define the equivariant Chern character as a homomorphism giving a
commutative triangle
\begin{equation}
\xymatrix{
K_G(X)\ar[rr]^-{\Ch_{G,\dl}}\ar[dr]_-{\Ch_G}&& H^{\ev}_{G,\dl}(X)\ar[dl]^-{\loc}\\
&H^{\ev}_{G}(X)
}
\label{EKR.146}\end{equation}
we start with an appropriate notion of `reduced connection' on the bundles
\eqref{EKR.135}, \eqref{EKR.136}.

Each bundle $W_\alpha$ can be equipped with a connection which is invariant
under the action of $\hN_\alpha.$ The Chern character is then an even closed
form on $X_\alpha$ which is invariant under the action of $\hN_\alpha,$
since the center acts trivially on forms thus is a true form on the base
$Y_\alpha.$ The same is true for the $\vN_\alpha$ action on $M'$ as a
bundle over $X_\alpha$ and the sum over the fibres of the product of these
closed forms is an element of the space \eqref{EKR.141}. The collection of
these forms satisfies \eqref{EKR.142} provided the connections are chosen
consistently. This defines the Chern character \eqref{EKR.146} with
commutativity following from the lifting of the standard definition of the
equivariant Chern character, see for example \cite{Berline-Getzler-Vergne1}.

\section{Atiyah-Hirzebruch isomorphism}\label{AHI}

\begin{theorem}\label{EKR.143} For any smooth action by a compact Lie group
  on a compact manifold the delocalized equivariant Chern character defines
  an isomorphism
\begin{equation}
\Ch_{G,\dl}:K_G(X)\otimes\bbR\longrightarrow H^*_{G,\dl}(X).
\label{EKR.144}\end{equation}
\end{theorem}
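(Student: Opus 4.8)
The plan is to prove Theorem~\ref{EKR.143} by induction on the depth of the iterated structure on $X$, reducing the general statement to the single-isotropy-type case handled in Theorem~\ref{EKR.168} by excision along the isotropy tree. First I would set up the inductive scheme: for the open stratum $X_0$ one has the reduced K-theory and delocalized cohomology expressed through Proposition~\ref{EKR.137} and the forms \eqref{EKR.141}, \eqref{EKR.142}, and for each proper stratum $X_\alpha$ the same data is already an instance of the theorem by induction (the depth of each $X_\alpha$ is strictly smaller). The induction step will compare the `full' reduced model over the tree $Y_*$ with the reduced model over the subtree obtained by pruning the open stratum, using the fact that on the boundary hypersurfaces $H_{\beta,\alpha}$ the chain conditions \eqref{EKR.136} and \eqref{EKR.142} are compatible with the augmented pull-backs $\phi^\#_{\beta,\alpha}$, which agree on both theories because the branching central extensions satisfy \eqref{EKR.181} and the Chern character of the branching bundle $\tau$ is a genuine deRham class on $Y(\pi)$.

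The mechanism carrying the induction is a six-term exact sequence in both K-theory and delocalized cohomology associated to removing one leaf (or the open stratum) from the isotropy tree. Concretely, for the tree $Y_*$ with `interior' piece $Y_0$ and `boundary subtree' $\partial Y_*=\bigcup_{\alpha>0} (Y_*)_\alpha$, I would produce a short exact sequence of chain complexes/categories whose middle term is the reduced model on $Y_*$, whose subobject is the collection of chains supported away from $Y_0$ (i.e.\ twisted bundles/forms that vanish in the open stratum, relative to the boundary), and whose quotient is the reduced model on $\partial Y_*$. Taking Grothendieck groups, resp.\ deRham cohomology, yields long exact (six-term, by Bott periodicity on the K-theory side) sequences. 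The delocalized Chern character \eqref{EKR.146} is natural for these sequences because it is defined stratum by stratum from $\hN_\alpha$-invariant connections and is compatible with the boundary pull-backs; hence it induces a map of long exact sequences. By the inductive hypothesis it is an isomorphism (after $\otimes\,\bbR$) on the sub and quotient terms — the quotient being $\partial Y_*$ of smaller depth, and the `interior' term being, after the localization/excision identification, twisted K-theory vs.\ twisted cohomology over the single stratum $Y_0$, where Theorem~\ref{EKR.168} and the twisted Atiyah-Hirzebruch isomorphism \eqref{EKR.182} apply. The five lemma then forces the isomorphism in the middle.

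The key input at the bottom of the induction, and the technical heart throughout, is the twisted Atiyah-Hirzebruch isomorphism \eqref{EKR.182} for torsion-twisted K-theory over each $Y_\alpha(\pi)$: injectivity comes from the multiplicativity \eqref{EKR.183} (an $n$-fold tensor power untwists when $n\alpha(\pi)=0$, and division by $n$ is an isomorphism rationally), and surjectivity is the classical equivariant fact already used in Theorem~\ref{EKR.168}. Assembling these over the finite orbit sets $H'_\alpha/N(H_\alpha)$, tensoring with $\cR(H_\alpha)$, and checking that the branching maps $\tau$ intertwine the twisted Chern characters with the coefficient pull-backs \eqref{EKR.176}, \eqref{EKR.180}–\eqref{EKR.181} gives compatibility with all the structure maps of the chain complex.

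The hard part will be the bookkeeping in the excision step: one must verify that the `relative' (supported-in-the-interior) subcategory of reduced bundles really does compute twisted K-theory of $Y_0$ modulo its boundary — i.e.\ that the chain conditions \eqref{EKR.136} are exactly the conditions defining a relative class — and that the connecting homomorphisms in the K-theory and cohomology sequences are genuinely intertwined by $\Ch_{G,\dl}$, not merely up to the localization map. This requires showing the augmented pull-backs $\phi^\#_{\beta,\alpha}$ on K-theory and on forms are compatible under the Chern character \emph{with the twistings}, which rests on the transgression-of-Mackey-class computation and on the branching relation \eqref{EKR.181} for the central extensions; once that naturality is in place the five-lemma argument is formal. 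I expect the verification that collapsing the open stratum yields precisely the pruned tree model $\partial Y_*$ — i.e.\ the exactness of the middle, not the outer terms — to be where most of the care is needed.
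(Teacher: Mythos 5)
Your proposal follows essentially the same route as the paper: induction over prunings of the isotropy tree, six-term excision sequences in both reduced K-theory and delocalized cohomology intertwined by the delocalized Chern character, the five lemma for the inductive step, and the single-isotropy-type case (Theorem~\ref{EKR.168}, resting on the torsion-twisted Atiyah--Hirzebruch isomorphism \eqref{EKR.182}--\eqref{EKR.183}) as the base input. The points you flag as delicate --- naturality of $\Ch_{G,\dl}$ for the connecting maps and compatibility of the augmented pull-backs with the twistings via \eqref{EKR.181} --- are exactly the content the paper leaves implicit in its diagram \eqref{EKR.151}.
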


The proof follows the same lines as in \cite{EqKtRe}. The delocalized
equivariant cohomology is naturally $\bbZ_2$-graded (not $\bbZ$-graded
because of the Chern character factors in the augmented pull-back
maps). Odd K-theory is defined in the usual way by suspension, i.e.\ as the
null space of the restriction morphism 
\begin{equation}
K_{G,\red}(\bbS\times Y_*)\longrightarrow K_{G,\red}(\{0\}\times Y_*).
\label{EKR.147}\end{equation}

Consider the `pruning' of the isotropy tree. Thus let $A$ be an ordered
collection of the indices, so 
\begin{equation}
\beta <\alpha,\ \beta \in P\Longrightarrow \alpha \in P.
\label{EKR.148}\end{equation}
In each cohomology theory the notion of triviality on the $X_\alpha$ for
$\alpha \in P$ is well-defined giving relative versions 
\begin{equation}
K^*_{G,\red}(Y_*;P),\ H^*_{G,\dl}(Y_*;P)\Mand H^*_{G}(Y_*;P).
\label{EKR.149}\end{equation}

In all three cohomology theories there is an excision sequence
corresponding to two ordered sets $P$ and $P'=P\cup\{\gamma \}.$ Namely for
(reduced) equivariant K-theory 
\begin{equation}
\xymatrix{
K^0_{G,\red}(Y_*;P')\ar[r]&K^0_{G,\red}(Y_*;P)\ar[r]&K^0_{G,\red}(Y_\gamma ;P)\ar[d]\\
K^1_{G,\red}(Y_\gamma ;P)\ar[u]&K^1_{G,\red}(Y_*;P)\ar[l]&K^1_{G,\red}(Y_*;P').\ar[l]
}
\label{EKR.150}\end{equation}

The delocalized Chern character gives an exact complex 
\begin{equation}
\xymatrix@!=3.5pc{
K^0_{G,\red}(Y_*;P')\ar[rr]\ar[dr]^{\Ch}&&
K^0_{G,\red}(Y_*;P)\ar[rr]\ar[d]^{\Ch}&&
K^0_{G,\red}(Y_\gamma ;P)\ar[dl]^{\Ch}\ar[ddd]
\\
&H^0_{\dl}(Y_*;P')\ar[r]&H^0_{\dl}(Y_*;P)\ar[r]&H^0_{\dl}(Y_\gamma ;P)\ar[d]\\
&H^1_{\dl}(Y_\gamma ;P)\ar[u]&H^1_{\dl}(Y_*;P)\ar[l]&H^1_{\dl}(Y_*;P').\ar[l]
\\
K^1_{G,\red}(Y_\gamma ;P)\ar[uuu]\ar[ur]_{\Ch}&&
K^1_{G,\red}(Y_*;P)\ar[ll]\ar[u]^{\Ch}&&
K^1_{G,\red}(Y_*;P')\ar[ul]^{\Ch}.\ar[ll]
}
\label{EKR.151}\end{equation}

This complex remains exact when tensored with $\bbR.$ We proceed by
induction over $P$ starting from the case that $P$ contains all elements
except the minimal one corresponding to the open isotropy type. The top
right and lower left Chern character maps are isomorphism as discussed
above and by induction the other two corner Chern maps are
isomorphisms. The fives lemma shows the central maps to be isomorphism, so
the induction continues. The case that $P=\emptyset$ is the
Atiyah-Hirzebruch-Baum-Brylinski-MacPherson isomorphism.

\providecommand{\bysame}{\leavevmode\hbox to3em{\hrulefill}\thinspace}
\providecommand{\MR}{\relax\ifhmode\unskip\space\fi MR }
\providecommand{\MRhref}[2]{%
  \href{http://www.ams.org/mathscinet-getitem?mr=#1}{#2}
}
\providecommand{\href}[2]{#2}


\begin{thebibliography}{10}

\bibitem{MR2560748}
Pierre Albin and Richard Melrose, \emph{Resolution of smooth group actions},
  Spectral theory and geometric analysis, Contemp. Math., vol. 535, Amer. Math.
  Soc., Providence, RI, 2011, pp.~1--26. \MR{2560748 (2012j:58008)}

\bibitem{MR86g:55006}
Paul Baum, Jean-Luc Brylinski, and Robert MacPherson, \emph{Cohomologie
  \'equivariante d\'elocalis\'ee}, C. R. Acad. Sci. Paris S\'er. I Math.
  \textbf{300} (1985), no.~17, 605--608. \MR{86g:55006}

\bibitem{Berline-Getzler-Vergne1}
Nicole Berline, Ezra Getzler, and Mich{\`e}le Vergne, \emph{Heat kernels and
  {D}irac operators}, Springer-Verlag, Berlin, 1992.

\bibitem{MR2189882}
Armand Borel and Lizhen Ji, \emph{Compactifications of symmetric and locally
  symmetric spaces}, Mathematics: Theory \& Applications, Birkh\"auser Boston,
  Inc., Boston, MA, 2006. \MR{2189882}

\bibitem{MR1911247}
Peter Bouwknegt, Alan~L. Carey, Varghese Mathai, Michael~K. Murray, and Danny
  Stevenson, \emph{Twisted {$K$}-theory and {$K$}-theory of bundle gerbes},
  Comm. Math. Phys. \textbf{228} (2002), no.~1, 17--45. \MR{1911247}

\bibitem{EqKtRe}
Panagiotis Dimakis and Richard Melrose, \emph{Equivariant k-theory and
  resolution, i: Abelian actions}, Geometric Analysis: In Honor of Gang Tian's
  60th Birthday (Jingyi Chen, Peng Lu, Zhiqin Lu, and Zhou Zhang, eds.),
  Progress in Mathematics, vol. 333, Birkhäuser Basel, 2020.

\bibitem{Guillemin-Sternberg}
Victor~W. Guillemin and Shlomo Sternberg, \emph{Supersymmetry and equivariant
  de {R}ham theory}, Mathematics Past and Present, Springer-Verlag, Berlin,
  1999, With an appendix containing two reprints by Henri Cartan.

\bibitem{Meinrenken-Enc}
Eckhard Meinrenken, \emph{Equivariant cohomology and the cartan model},
  Encyclopedia of Mathematical Physics, Academic Press, 2006, pp.~242--250.

\bibitem{Murray}
Michael~K Murray, \emph{Bundle gerbes}, J. London Math. Soc (1996), 403--416.

\bibitem{Wassermann}
Antony Wasserman, \emph{Equivariant {K}-theory {I}: compact transformation
  groups with one orbit type}, IHES preprint, 1985.

\end{thebibliography}
\end{document}